\newtheorem{theorem}{Theorem}[section]
\newtheorem{proposition}[theorem]{Proposition}
\newtheorem{lemma}[theorem]{Lemma}
\newtheorem*{theorem*}{Theorem}
\newtheorem*{proposition*}{Proposition}
\newtheorem*{corollary*}{Corollary}
\newtheorem*{lemma*}{Lemma}
\theoremstyle{definition}
\newtheorem{remark}[theorem]{Remark}
\newtheorem*{remark*}{Remark}
\newtheorem*{definition*}{Definition}
\newtheorem{aussage}[theorem]{}
\newtheorem*{aussage*}{}
\newcommand{\tensor}[1]{\otimes_{#1}}
\newcommand{\ox}{\otimes}
\def\idemp{\overline \Psi}
\def\iotaA{\alpha}
\def\iotaB{\beta}
\def\epi{\pi}
\def\mono{\iota}
\def\Wfact{\mathsf{Bf}}
\begin{document}
\title{Bilinear  factorization of algebras}
\author{Gabriella B\"ohm} 
\address{Research Institute for Particle and Nuclear Physics, Budapest,
 \newline
\indent H-1525 Budapest 114, P.O.B.\ 49, Hungary}
\email{G.Bohm@rmki.kfki.hu}
\author{Jos\'e G\'omez-Torrecillas}
\address{Departamento de \'Algebra Universidad de Granada,
           E-18071 Granada, Spain}
\email{gomezj@ugr.es}
\begin{abstract}
We study the (so-called bilinear) factorization problem answered
by a weak wreath product (of monads and, more specifically, of algebras over a
commutative ring) in the works by Street and by Caenepeel and De Groot.
A  bilinear  factorization of a monad $R$ turns out to be given by
monad morphisms $A\to R\leftarrow B$ inducing a split epimorphism of $B$-$A$
bimodules $B\ox A\to R$.
We prove a biequivalence between the bicategory of weak distributive laws and
an appropriately defined bicategory of  bilinear factorization
structures. 
As an illustration of the theory, we collect some examples of algebras over
commutative rings which admit a bilinear factorization; i.e. which
arise as weak wreath products.  
\end{abstract}
\date{Aug 2011}
\maketitle

\section*{Introduction}

A {\em distributive law} (in a bicategory) consists of two monads $A$ and $B$
together with a 2-cell $A\ox B\to B\ox A$ which is compatible with the monad
structures, see \cite{Beck}. 

A distributive law $A\ox B\to B\ox A$ is known to be equivalent to a monad
structure on the composite $B\ox A$ such that the multiplication commutes with
the actions by $B$ on the left and by $A$ on the right. The monad $B\ox A$ is
known as a {\em wreath product}, a {\em twisted product}, or a {\em smash
product} of $A$ and $B$.  

Given a monad $R$, one may ask under what conditions it is isomorphic to a
wreath product of $A$ and $B$. This question is known as a {\em (strict)
factorization} problem and the answer is this. A monad $R$ is isomorphic to a
wreath product of $A$ and $B$ if and only if there are monad morphisms $A\to R
\leftarrow B$ such that composing $B \ox A\to R\ox R$ with the multiplication
$R\ox R \to R$ yields an isomorphism $B\ox A\cong R$. In the particular
bicategory of spans this and related questions were studied in
\cite{RosWoo}. In the monoidal category (i.e. one object bicategory) of
modules over a commutative ring; and also in its opposite, such questions were
investigated in \cite{CIMZ}, see also \cite{Majid:1990} and
\cite{Tambara:1990}.  

In the papers \cite{Stef&Erwin} and \cite{Street:2009}, the notion of
distributive law was generalized by weakening the compatibility conditions with
the units of the monads. A so defined {\em weak distributive law} $A\ox B\to
B\ox A$ also induces an associative multiplication on $B\ox A$ but it fails to
be unital. However, there is a canonical idempotent on $B\ox A$. Whenever it
splits, the corresponding retract is a monad, known as the {\em weak wreath
product} or {\em weak smash product} of $A$ and $B$, see \cite{Street:2009}
and \cite{Stef&Erwin}. 

The aim of this paper is to study the factorization problem answered by a weak
wreath product. 
In the particular bicategory of spans, this problem has already been studied
in \cite{Bohm_wfactor}. 
In the paper \cite{VilRodRap} addressing questions of similar motivation, a
more general notion of {\em weak crossed product monad} was considered. Such
weak crossed products are not induced by weak distributive laws but by more
general 1-cells in an extended bicategory of monads introduced in
\cite{Boh_wtm}. The factorization problem corresponding to weak crossed
products is fully described in \cite{VilRodRap}. 

We start Section \ref{sec:wwp_vs_wfac} by recalling from \cite{Street:2009}
the notion of weak distributive law and the corresponding construction of weak
wreath product. We show that a monad $R$ is isomorphic to a weak wreath
product of some monads $A$ and $B$ if and only if there are monad morphisms
(with trivial 1-cell parts) $A\to R \leftarrow B$ such that composing $B\ox A
\to R\ox R$ with the multiplication $R\ox R\to R$ yields a split epimorphism
of $B$-$A$ bimodules $B\ox A \to R$. What is more, in Theorem
\ref{thm:bieq}, for any bicategory in which idempotent 2-cells split, we prove
a biequivalence of the bicategory of weak distributive laws and an
appropriately defined bicategory of  bilinear factorization
structures. This extends \cite[Theorem 3.12]{Bohm_wfactor}.

Section \ref{sec:examples} is devoted to collecting examples of algebras over
commutative rings which admit a  bilinear  factorization. 

The algebra homomorphisms $A\to R\leftarrow B$ in a bilinear
factorization structure are not injective in general. In Paragraph
\ref{as:injective} we show, however, that if $R$ admits any  bilinear
factorization then it admits also a bilinear  factorization
with injective algebra homomorphisms $\tilde A\to R\leftarrow \tilde B$. In
general the latter factorization is still non-strict and we
characterize those cases when it happens to be strict.   

In Paragraph \ref{as:e} we consider an algebra $A$ and an element $e$ of it
such that $ea=eae$ for all $a\in A$ (so that $eA$ is an algebra with unit
$e$). Assuming that there is a strict distributive law $eA\ox B \to B\ox eA$,
we extend it to a weak distributive law $A\ox B \to B\ox A$. The corresponding
weak wreath product is isomorphic to the strict wreath product of $eA$ and
$B$; hence it admits a strict factorization in terms of them. 

The Ore extension of an algebra $B$ over a commutative ring $k$ is the wreath
product of $B$ with the algebra $k[X]$ of polynomials of a formal variable
$X$, see \cite[Example 2.11 (1)]{CIMZ}. In Paragraph \ref{as:Ore},
generalizing Ore extensions, we construct a weak wreath product of $B$ with
$k[X]$, that we regard as a weak Ore extension of $B$ (although it turns out
to be isomorphic in a nontrivial way to a strict Ore extension of an
appropriate subalgebra $\tilde B$). 

For any commutative ring $k$, there is a bicategory $\mathsf{Bim}$ of
$k$-algebras, their bimodules and bimodule maps. In Paragraph \ref{as:sF} we
consider strict distributive laws in $\mathsf{Bim}$. Taking a 0-cell
(i.e. $k$-algebra) $R$ which admits a separable Frobenius structure, we show
that any distributive law over $R$ induces a weak distributive law over
$k$. The corresponding weak wreath product is isomorphic to the $R$-module
tensor product. The examples in Paragraph \ref{as:dirsum} and Paragraph
\ref{as:wba} belong to this class of examples.

In Paragraph \ref{as:dirsum} we start with a finite collection of strict
distributive laws $A_i\ox B_i \to B_i \ox A_i$ and construct a weak
distributive law $(\oplus_i A_i)\ox (\oplus_i B_i)\to (\oplus_i B_i) \ox
(\oplus_i A_i)$. The corresponding weak wreath product is isomorphic to the
direct sum of the wreath product algebras $B_i\ox A_i$. 

In Paragraph \ref{as:wba} we take a weak bialgebra $H$ and an $H$-module
algebra $A$. We show that their smash product is a weak wreath product.

In Paragraph \ref{as:2x2=3} we present explicitly a bilinear 
factorization of the three dimensional noncommutative algebra of $2 \times 2$
upper triangle matrices with entries in a field $k$ whose characteristic is
different from $2$, in terms of two copies of the commutative algebra $k\oplus
k$.  
\bigskip

{\bf Acknowledgements.} GB thanks all members of Departamento de \'Algebra
at Universidad de Granada for a generous invitation and for a very warm
hospitality experienced during her visit in June 2010 when the work on this
paper begun. Partial financial support from the Hungarian Scientific Research
Fund OTKA, grant no. K68195, and from the Ministerio de Ciencia e Innovaci\'on
and FEDER, grant MTM2010-20940-C02-01 is gratefully acknowledged.

\section{Weak wreath products and bilinear factorizations}
\label{wpalgebras}
\label{sec:wwp_vs_wfac}

In this section we work in a bicategory $\mathcal K$, whose coherence
isomorphisms will be omitted in our notation. The horizontal composition is
denoted by $\ox$ and the vertical composition is denoted by juxtaposition. 
Our motivating example is the one-object bicategory (i.e. monoidal category) of
modules over a commutative ring (where $\ox$ is the module tensor product). 

\begin{aussage} {\bf Weak distributive laws.}
Let $(A,\mu_A,\eta_A)$ and $(B,\mu_B,\eta_B)$ be (associative and unital)
monads in $\mathcal K$ on the same object, with multiplications $\mu_A, \mu_B$
and units $\eta_A, \eta_B$. 
Following \cite[Theorem 3.2]{Stef&Erwin} and \cite[Definition
2.1]{Street:2009}, a 2-cell $\Psi : A \tensor{} B \rightarrow B \tensor{}
A$ is said to be a \emph {weak distributive law of $A$ over $B$} if the
following diagrams commute. 
\begin{equation}\label{eq:wdl}
\xymatrix @R=15pt{
A\ox A\ox B \ar[r]^-{\mu\ox B}\ar[d]_-{A\ox \Psi}&
A\ox B\ar[dd]^-\Psi
&
A\ox B\ox B\ar[r]^-{A\ox \mu}\ar[d]_-{\Psi\ox B}&
A\ox B \ar[dd]^-\Psi &
B\ox A \ar[r]^-{B\ox A \ox \eta}\ar[d]_-{\eta \ox B\ox A}&
B\ox A \ox B \ar[d]^-{B\ox \Psi}\\
A\ox B\ox A\ar[d]_-{\Psi\ox A}&
&
B\ox A\ox B \ar[d]_-{B\ox \Psi}&
&
A\ox B \ox A \ar[d]_-{\Psi\ox A}&
B\ox B \ox A \ar[d]^-{\mu \ox A}\\
B\ox A\ox A\ar[r]_-{B\ox \mu}&
B\ox A 
&
B\ox B \ox A\ar[r]_-{\mu\ox A}&
B\ox A 
&
B\ox A \ox A\ar[r]_-{B\ox \mu}&
B\ox A
}
\end{equation}
\end{aussage}

\begin{lemma}\cite[Proposition 2.2]{Street:2009}
The third diagram in \eqref{eq:wdl} is equivalent to the following two
diagrams.
\begin{equation}\label{eq:wdl_alt}
\xymatrix @R=15pt{
B\ar[r]^-{\eta\ox B}\ar[d]_-{B\ox \eta\ox \eta}&
A \ox B \ar[dd]^-{\Psi}
&
A \ar[r]^-{A\ox \eta}\ar[d]_-{\eta\ox \eta \ox A}&
A\ox B\ar[dd]^-{\Psi}\\
B\ox A \ox B \ar[d]_-{B\ox \Psi}&
&
A\ox B \ox A \ar[d]_-{\Psi\ox A}&
&\\
B\ox B \ox A \ar[r]_-{\mu \ox A}&
B\ox A 
&
B\ox A \ox A \ar[r]_-{B\ox \mu}&
B\ox A}
\end{equation}
\end{lemma}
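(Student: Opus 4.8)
The claim is that the single identity encoded by the third diagram of \eqref{eq:wdl}, namely
$(\mu_B\ox A)(B\ox\Psi)(B\ox A\ox\eta_B)=(B\ox\mu_A)(\Psi\ox A)(\eta_A\ox B\ox A)$
as $2$-cells $B\ox A\to B\ox A$, is equivalent to the conjunction of the two identities in \eqref{eq:wdl_alt}. The proof uses nothing beyond the interchange law of $\mathcal K$ (in practice, whiskering) together with the unit axioms $\mu_A(A\ox\eta_A)=1_A=\mu_A(\eta_A\ox A)$ of the monad $A$ and their counterparts for $B$; neither the associativity of the monads nor any of the other two diagrams of \eqref{eq:wdl} enters.

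For the implication from \eqref{eq:wdl_alt} to the third diagram of \eqref{eq:wdl}, write $L$ and $R$ for the left-hand and right-hand composites $B\ox A\to B\ox A$ in that diagram. The plan is to show that each of $L$ and $R$ equals the single $2$-cell $(\mu_B\ox\mu_A)\big(B\ox\big(\Psi(\eta_A\ox\eta_B)\big)\ox A\big)$. For $L$ one whiskers the second diagram of \eqref{eq:wdl_alt} on the left by $B$, which rewrites $(B\ox\Psi)(B\ox A\ox\eta_B)$ as $(B\ox B\ox\mu_A)(B\ox\Psi\ox A)(B\ox\eta_A\ox\eta_B\ox A)$; substituting this into $L$ and collapsing $(\mu_B\ox A)(B\ox B\ox\mu_A)=\mu_B\ox\mu_A$ and $(B\ox\Psi\ox A)(B\ox\eta_A\ox\eta_B\ox A)=B\ox\big(\Psi(\eta_A\ox\eta_B)\big)\ox A$ by interchange gives the asserted value. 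For $R$ one whiskers the first diagram of \eqref{eq:wdl_alt} on the right by $A$, which rewrites $(\Psi\ox A)(\eta_A\ox B\ox A)$ as $(\mu_B\ox A\ox A)(B\ox\Psi\ox A)(B\ox\eta_A\ox\eta_B\ox A)$; substituting into $R$ and collapsing $(B\ox\mu_A)(\mu_B\ox A\ox A)=\mu_B\ox\mu_A$ in the same way yields the same value, so $L=R$.

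For the converse, precompose the identity $L=R$ with unit $2$-cells mapping into $B\ox A$. Precomposing with $\eta_B\ox A\colon A\to B\ox A$ and simplifying the left side via the interchange rewriting $(B\ox\Psi)(\eta_B\ox A\ox\eta_B)=(\eta_B\ox B\ox A)\,\Psi(A\ox\eta_B)$ followed by the unit law $\mu_B(\eta_B\ox B)=1_B$ turns $L\circ(\eta_B\ox A)$ into $\Psi(A\ox\eta_B)$; simplifying the right side via $(\eta_A\ox B\ox A)(\eta_B\ox A)=\eta_A\ox\eta_B\ox A$ turns $R\circ(\eta_B\ox A)$ into $(B\ox\mu_A)(\Psi\ox A)(\eta_A\ox\eta_B\ox A)$, so the equality becomes precisely the second diagram of \eqref{eq:wdl_alt}. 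Symmetrically, precomposing $L=R$ with $B\ox\eta_A\colon B\to B\ox A$, the left side becomes $(\mu_B\ox A)(B\ox\Psi)(B\ox\eta_A\ox\eta_B)$, the right-hand composite of the first diagram of \eqref{eq:wdl_alt}, while the right side collapses, using $(\Psi\ox A)(\eta_A\ox B\ox\eta_A)=\big(\Psi(\eta_A\ox B)\big)\ox\eta_A$ and the unit law $\mu_A(A\ox\eta_A)=1_A$, to $\Psi(\eta_A\ox B)$; this is the first diagram of \eqref{eq:wdl_alt}.

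I do not expect a genuine obstacle here: the only thing requiring care is the bookkeeping of which unit or multiplication acts on which tensor factor and the repeated, essentially mechanical, use of the interchange law, made slightly more delicate by the suppression of the coherence isomorphisms of $\mathcal K$. In string-diagram form each of the above manipulations is a one-step local move, and the real content of the lemma is just that the weak-unit compatibility of $\Psi$ decouples into an $A$-sided and a $B$-sided half without loss of information.
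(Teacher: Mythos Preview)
Your proof is correct and follows essentially the same route as the paper's. For the implication \eqref{eq:wdl_alt} $\Rightarrow$ third diagram of \eqref{eq:wdl}, the paper draws the commutative diagram \eqref{eq:kappas} whose diagonal is precisely your common value $(\mu_B\ox\mu_A)\big(B\ox\Psi(\eta_A\ox\eta_B)\ox A\big)$, with the two triangles being the whiskered forms of the diagrams in \eqref{eq:wdl_alt}; for the converse, the paper also precomposes with $B\ox\eta_A$ (and symmetrically $\eta_B\ox A$) and collapses via the unit laws, exactly as you do.
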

\begin{proof}
The following diagram shows that commutativity of the third diagram in
\eqref{eq:wdl} implies commutativity of the first diagram in
\eqref{eq:wdl_alt},
$$
\xymatrix @R=15pt {
B\ar[r]^-{B\ox \eta}\ar[d]_-{\eta\ox B}&
B\ox A \ar[r]^-{B\ox A \ox \eta}\ar[d]^-{\eta\ox B\ox A}&
B\ox A \ox B \ar[r]^-{B\ox \Psi}&
B\ox B \ox A\ar[dd]^-{\mu \ox A}\\
A\ox B \ar[r]^-{A\ox B \ox \eta}\ar[d]_-{\Psi}&
A\ox B \ox A \ar[d]^-{\Psi\ox A}&&\\
B\ox A \ar[r]^-{B\ox A \ox \eta}\ar@{=}@/_1.5pc/[rrr]&
B\ox A \ox A \ar[rr]^-{B\ox \mu}&&
B\ox A}
$$
\vspace{.6cm}

\noindent
where the region on the right commutes by the third diagram in
\eqref{eq:wdl}. Commutativity of the second diagram in \eqref{eq:wdl_alt} 
is verified symmetrically. 
Conversely, if both diagrams in \eqref{eq:wdl_alt} commute then so does
\begin{equation}\label{eq:kappas}
\xymatrix @R=12pt{
B\ox A \ar[rrr]^-{\eta\ox B \ox A}
\ar[ddd]_-{B\ox A \ox \eta}\ar[rd]^(.6){B\ox \eta \ox \eta \ox A}&&&
A\ox B \ox A \ar[dd]^-{\Psi\ox A}\\
&B\ox A \ox B \ox A \ar[rd]^-{B\ox \Psi \ox A}&&\\
&&B\ox B \ox A \ox A \ar[r]^-{\mu \ox A \ox A}\ar[d]_-{B\ox B \ox \mu}
\ar[rd]^-{\mu\ox \mu}&
B\ox A \ox A \ar[d]^-{B\ox \mu}\\
B\ox A \ox B \ar[rr]_-{B\ox \Psi}&&
B\ox B \ox A \ar[r]_-{\mu \ox A}&
B\ox A}
\end{equation}
\end{proof}

\begin{aussage} {\bf Weak wreath product.}
Define $\mu : B \tensor{} A \tensor{} B \tensor{} A
\rightarrow B \tensor{} A$ as 
$$
\xymatrix{
B\ox A \ox B \ox A \ar[rr]^-{B\ox \Psi \ox A}&&
B\ox B \ox A \ox A \ar[r]^-{\mu\ox \mu}&
B\ox A.}
$$
It follows from the first two diagrams in \eqref{eq:wdl} that $\mu$  is an
associative multiplication. 
From now on, we consider $B \tensor{} A$ as an associative monad with the
multiplication $\mu$ -- possibly without a unit. (In fact,
$B\ox A$ can be seen to possess a preunit $\eta_B \ox \eta_A$ in the sense
discussed in \cite{Stef&Erwin}.)  
\end{aussage}

\begin{proposition} \label{prop:kappa}
(See \cite[Proposition 2.3]{Street:2009}.)
For any weak distributive law $\Psi:A\ox B \to B \ox A$, define $\idemp : B
\tensor{} A \rightarrow B \tensor{} A$ by  
\begin{equation}\label{wdl6}
\xymatrix{
B\ox A \ar[rr]^-{B \ox \eta_A \ox \eta_B \ox A}&&
B\ox A \ox B \ox A \ar[r]^-\mu&
B\ox A.}
\end{equation}
Then $\idemp$ is an idempotent endomorphism of monads (without unit), and of
$B$-$A$ bimodules. Moreover, 
$\idemp \Psi = \Psi$.
\end{proposition}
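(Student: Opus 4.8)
The plan is to first collect several equivalent descriptions of $\idemp$ obtained by absorbing units, and then read off the four assertions essentially formally.

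Write $\nu:=\Psi(\eta_A\ox\eta_B)$; unfolding the definitions gives at once $\idemp=(\mu_B\ox\mu_A)(B\ox\nu\ox A)$. By the preceding Lemma the two diagrams of \eqref{eq:wdl_alt} say precisely that
\begin{equation*}
\Psi\,(\eta_A\ox B)=(\mu_B\ox A)(B\ox\nu)
\qquad\text{and}\qquad
\Psi\,(A\ox\eta_B)=(B\ox\mu_A)(\nu\ox A).
\end{equation*}
Feeding the second identity into the formula for $\idemp$ produces the folded form $\idemp=(\mu_B\ox A)(B\ox\Psi)(B\ox A\ox\eta_B)$, and the first produces $\idemp=(B\ox\mu_A)(\Psi\ox A)(\eta_A\ox B\ox A)$; recalling $\mu=(\mu_B\ox\mu_A)(B\ox\Psi\ox A)$ and absorbing a unit via unitality of $\mu_A$, resp.\ $\mu_B$, these two forms equal $\mu\,(B\ox A\ox e)$, resp.\ $\mu\,(e\ox B\ox A)$, where $e:=\eta_B\ox\eta_A$. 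Each of these is a two-line diagram chase, and it is here -- in the correct use of the weak (third) axiom of \eqref{eq:wdl} and the unit absorptions -- that the only real work lies.

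With these forms in hand the assertions are quick. Left $B$-linearity and right $A$-linearity of $\idemp$ are immediate from $\idemp=(\mu_B\ox\mu_A)(B\ox\nu\ox A)$ and associativity of $\mu_B$, resp.\ $\mu_A$, alone (re-bracket the extra $B$, resp.\ $A$, past $\nu$). For $\idemp\Psi=\Psi$ I would compose the folded form $\idemp=(\mu_B\ox A)(B\ox\Psi)(B\ox A\ox\eta_B)$ with $\Psi$ and whisker $\eta_B$ past $\Psi$, obtaining $\idemp\Psi=(\mu_B\ox A)(B\ox\Psi)(\Psi\ox\eta_B)$; the second diagram of \eqref{eq:wdl} turns $(\mu_B\ox A)(B\ox\Psi)(\Psi\ox B)$ into $\Psi(A\ox\mu_B)$, and unitality of $\mu_B$ then gives $\idemp\Psi=\Psi(A\ox\mu_B(B\ox\eta_B))=\Psi$. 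Idempotency follows from this together with left $B$-linearity:
\begin{equation*}
\idemp\idemp=(\mu_B\ox A)(B\ox\idemp)(B\ox\Psi)(B\ox A\ox\eta_B)=(\mu_B\ox A)\big(B\ox(\idemp\Psi)\big)(B\ox A\ox\eta_B)=(\mu_B\ox A)(B\ox\Psi)(B\ox A\ox\eta_B)=\idemp.
\end{equation*}

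Finally, that $\idemp$ is an endomorphism of the non-unital monad $(B\ox A,\mu)$, i.e.\ $\idemp\mu=\mu(\idemp\ox\idemp)$, I would deduce from the preunit forms and associativity of $\mu$: these yield $\idemp\mu=\mu(\idemp\ox B\ox A)$ and $\idemp\mu=\mu(B\ox A\ox\idemp)$, so
\begin{equation*}
\mu(\idemp\ox\idemp)=\mu(\idemp\ox B\ox A)(B\ox A\ox\idemp)=\idemp\mu\,(B\ox A\ox\idemp)=\idemp\idemp\mu=\idemp\mu,
\end{equation*}
the last equality by idempotency. (One in fact gets the sharper identity $\idemp\mu=\mu$: bilinearity gives $\idemp(\mu_B\ox\mu_A)=(\mu_B\ox\mu_A)(B\ox\idemp\ox A)$, while $(B\ox\idemp\ox A)(B\ox\Psi\ox A)=B\ox(\idemp\Psi)\ox A=B\ox\Psi\ox A$, so $\idemp\mu=(\mu_B\ox\mu_A)(B\ox\Psi\ox A)=\mu$.) The main obstacle is the first step above; everything afterwards is formal manipulation with the unit and associativity laws.
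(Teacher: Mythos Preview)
Your proof is correct and follows essentially the same approach as the paper: both extract the two folded forms $\idemp=(\mu_B\ox A)(B\ox\Psi)(B\ox A\ox\eta_B)=(B\ox\mu_A)(\Psi\ox A)(\eta_A\ox B\ox A)$ from the weak-unitality axiom (via the preceding Lemma), read off bilinearity, and obtain $\idemp\Psi=\Psi$ from a multiplicativity axiom of $\Psi$. The only organisational difference is that the paper proves the sharper identities $\idemp\mu=\mu$ and $\mu(\idemp\ox\idemp)=\mu$ directly (the latter via a separate diagram showing $(B\ox\mu_A)(\Psi\ox A)(A\ox\idemp)=(B\ox\mu_A)(\Psi\ox A)$) and deduces idempotency and multiplicativity from them, whereas you get multiplicativity from the preunit forms and associativity of $\mu$ and relegate $\idemp\mu=\mu$ to a parenthetical; since those two sharper identities are used repeatedly in the sequel (Theorem~\ref{BPsiA}), it is worth promoting your parenthetical to a stated conclusion.
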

\begin{proof}
Note that $\idemp$ stands in the diagonal of the diagram
\eqref{eq:kappas}. Hence it has the equivalent forms
\begin{eqnarray}
\idemp&=&(B\ox \mu_A)(\Psi\ox A)(\eta_A\ox B \ox A)\label{eq:kappaL}\\
&=&(\mu_B \ox A)(B\ox \Psi)(B\ox A \ox \eta_B) \label{eq:kappaR}.
\end{eqnarray}
Since the expression \eqref{eq:kappaL} is evidently a right $A$-module map and
\eqref{eq:kappaR} is a left $B$-module map, this proves the bilinearity of
$\idemp$, i.e. 
\begin{equation}\label{wdl8}
(\mu_B \tensor{} A)(B \tensor{} \idemp) = \idemp(\mu_B \tensor{} A) 
\qquad \textrm{and}\qquad 
(B \tensor{} \mu_A)(\idemp \tensor{} A) = \idemp(B \tensor{} \mu_A).
\end{equation}
By commutativity of
$$
\xymatrix {
A\ox B  \ar[r]^-{\Psi}\ar[d]^-{\eta\ox A \ox B}
\ar@{=}@/_3pc/[ddd]&
B\ox A \ar[d]^-{\eta\ox B\ox A}\\
A\ox A \ox B \ar[r]^-{A\ox \Psi}\ar[dd]^(.25){\mu\ox B}
\ar@{}[rdd]|{\eqref{eq:wdl}}&
A\ox B \ox A \ar[d]^-{\Psi\ox A}\\
&B\ox A \ox A\ar[d]^-{B\ox \mu}\\
A\ox B \ar[r]_-{\Psi }&
B\ox A}
$$
and \eqref{eq:kappaL}, we obtain $\idemp\Psi =\Psi$. This implies 
\begin{equation}\label{wdl12}
\idemp \mu =
\idemp(\mu_B \ox  \mu_A)(B\ox \Psi\ox A)
\stackrel{\eqref{wdl8}}{=}
(\mu_B\ox \mu_A)(B\ox \idemp\ox A)(B\ox \Psi\ox A)= 
\mu,
\end{equation}
hence also $\idemp^2=\idemp$. Moreover, by commutativity of
$$
\xymatrix{
A\ox B \ox A \ar[rr]^-{A\ox \eta \ox B\ox A}\ar@{=}[rrdd]&&
A\ox A \ox B\ox A \ar[r]^-{A\ox \Psi\ox A}\ar[dd]^-{\mu\ox B \ox A}
\ar@{}[rdd]|{\eqref{eq:wdl}}&
A\ox B \ox A\ox A \ar[r]^-{A\ox B \ox \mu} \ar[d]^-{\Psi\ox A \ox A}&
A\ox B \ox A\ar[d]^-{\Psi \ox A}\\
&&&B\ox A \ox A \ox A\ar[r]^-{B\ox A \ox \mu}\ar[d]^-{B\ox \mu \ox A}&
B\ox A \ox A \ar[d]^-{B\ox \mu}\\
&&A\ox B \ox A \ar[r]_-{\Psi\ox A}&
B\ox A \ox A \ar[r]_-{B\ox \mu}&
B\ox A}
$$
and \eqref{eq:kappaL}, we obtain $(B\ox \mu_A)(\Psi\ox A)(A\ox \idemp)=(B\ox
\mu_A)(\Psi\ox A)$.  
This implies that 
$$
\mu(B\ox A \ox \idemp)=
(\mu_B \ox A)(B\ox B \ox \mu_A)(B\ox \Psi \ox A)(B\ox A \ox \idemp)=
\mu.
$$
Combining it with the symmetrical counterpart, we conclude that
\begin{equation}\label{wdl11}
\mu(\idemp \tensor{} \idemp) = \mu.
\end{equation}
From \eqref{wdl11} and \eqref{wdl12} we get that $\idemp$ is multiplicative
with respect to $\mu$.
\end{proof}

\begin{aussage}{\bf Splitting idempotents.}
Assume that the idempotent 2-cell $\idemp$ associated in Proposition
\ref{prop:kappa} to a weak distributive law $\Psi$ splits. That is, there is a
(unique up-to isomorphism) 1-cell $B\otimes_\Psi A$ and 2-cells $\pi:B\ox A
\to B\otimes_\Psi A$ and $\iota:B\otimes_\Psi A \to B\ox A$ such that
$\pi\iota=B\otimes_\Psi A$ and $\iota\pi=\idemp$.  
Since $\idemp$ is a morphism of $B$-$A$ bimodules, there is a unique $B$-$A$
bimodule structure on $B\ox_\Psi A$ such that both $\pi$ and $\iota$ are
morphisms of $B$-$A$ bimodules (i.e. $B\ox_\Psi A$ is a $B$-$A$ bimodule
retract of $B\ox A$).
\end{aussage}

\begin{theorem}\label{BPsiA} (See \cite[Theorem 2.4]{Street:2009}.)
Let $\Psi : A \tensor{} B \rightarrow B \tensor{} A$ be a weak distributive
law  in a bicategory $\mathcal K$, such that the associated idempotent 2-cell
$\idemp$ splits. Then there is a retract monad $(B \tensor{\Psi} A,\mu_\Psi)$
of $(B \tensor{} A, \mu)$ which is unital. Moreover, the 2-cells
$$
\iotaB := \pi(B \tensor{} \eta_A) : B \rightarrow B \tensor \Psi A, \qquad
\iotaA := \pi(\eta_B \tensor{} A) : A \rightarrow B \tensor \Psi A 
$$
are homomorphisms of unital monads such that $\mu_\Psi(\iotaB \tensor{}
\iotaA) : B \tensor{} A \rightarrow B \tensor{\Psi} A$ is  equal to $\pi$; and
the left $B$- and right $A$-actions on $B\ox_\Psi A$ can be written as
$\mu_\Psi(\iotaB\ox (B\ox_\Psi A))$ and $\mu_\Psi((B\ox_\Psi A)\ox \iotaA)$,
respectively. 
\end{theorem}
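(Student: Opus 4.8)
The plan is to verify each assertion in turn, always pulling information on $B\ox_\Psi A$ back to statements about $\idemp$ on $B\ox A$ via the splitting $\pi,\iota$. First I would define the candidate multiplication $\mu_\Psi := \pi\mu(\iota\ox\iota)$ on $B\ox_\Psi A$ and the candidate unit $\eta_\Psi := \pi(\eta_B\ox\eta_A)$. That $\mu_\Psi$ is associative follows from associativity of $\mu$ (established after \eqref{eq:wdl}) together with $\iota\pi=\idemp$ and the fact (\eqref{wdl11}) that $\mu(\idemp\ox\idemp)=\mu$, which lets one insert or delete the idempotent $\iota\pi$ freely in the middle of a composite. For the unit axioms I would use $\iota$ is a monomorphism (being a section of $\pi$): it suffices to check $\iota\mu_\Psi(\eta_\Psi\ox(B\ox_\Psi A)) = \iota$ and the right-handed version. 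Unfolding, $\iota\mu_\Psi(\eta_\Psi\ox B\ox_\Psi A) = \idemp\mu(\idemp\ox\idemp)(\eta_B\ox\eta_A\ox\iota) = \idemp\mu(\eta_B\ox\eta_A\ox\iota)$ using \eqref{wdl12} and \eqref{wdl11}; and $\mu(\eta_B\ox\eta_A\ox -) = (B\ox\mu_A)(\Psi\ox A)(\eta_A\ox B\ox A) = \idemp$ by \eqref{eq:kappaL}, so the whole thing collapses to $\idemp^2\iota = \idemp\iota = \iota$. The right unit axiom is symmetric, using \eqref{eq:kappaR}. Thus $(B\ox_\Psi A,\mu_\Psi,\eta_\Psi)$ is a unital monad and a retract of $(B\ox A,\mu)$.

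Next I would treat the morphisms $\iotaB=\pi(B\ox\eta_A)$ and $\iotaA=\pi(\eta_B\ox A)$. To see $\iotaB$ is a monad morphism, unit preservation is immediate from the definitions; multiplicativity $\mu_\Psi(\iotaB\ox\iotaB)=\iotaB(\mu_B\ox\text{(coherence)})$ I would check again after applying $\iota$, reducing it to $\idemp\mu(\idemp\ox\idemp)(B\ox\eta_A\ox B\ox\eta_A)=\idemp\mu(B\ox\eta_A\ox B\ox\eta_A)$ and then computing $\mu(B\ox\eta_A\ox B\ox\eta_A) = (\mu_B\ox\mu_A)(B\ox\Psi\ox A)(B\ox\eta_A\ox B\ox\eta_A)$; here the middle instance of $\Psi(\eta_A\ox B)$ is simplified using the second diagram of \eqref{eq:wdl_alt} (or directly the appropriate unit compatibility), landing on $\mu_B$ times the identity on $A$. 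Symmetrically $\iotaA$ is a monad morphism using $\Psi(A\ox\eta_B)$ and the first diagram of \eqref{eq:wdl_alt}.

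For the identity $\mu_\Psi(\iotaB\ox\iotaA)=\pi$: after applying $\iota$, the left side is $\idemp\mu(\idemp\ox\idemp)(B\ox\eta_A\ox\eta_B\ox A) = \idemp\mu(B\ox\eta_A\ox\eta_B\ox A)$, and $\mu(B\ox\eta_A\ox\eta_B\ox A)$ is by definition \eqref{wdl6} precisely $\idemp$; so the left side is $\idemp^2=\idemp=\iota\pi$, and cancelling the mono $\iota$ gives the claim. Finally, the action formulas: the left $B$-action on $B\ox_\Psi A$ was fixed so that $\pi$ and $\iota$ are $B$-$A$ bimodule maps, i.e. the action is $\pi(\mu_B\ox A)(B\ox\iota)$; I must identify this with $\mu_\Psi(\iotaB\ox(B\ox_\Psi A))$. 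Unfolding the latter and pushing $\iota$ through, one gets $\idemp(\mu_B\ox\mu_A)(B\ox\Psi\ox A)(B\ox\eta_A\ox\iota)$; the block $(\mu_A)( \Psi)(\ \eta_A\ox\ )$ on the right three legs, composed as $(B\ox\mu_A)(\Psi\ox A)(\eta_A\ox B\ox A)$, is again $\idemp$ by \eqref{eq:kappaL}, and then $\idemp(\mu_B\ox A)(B\ox\idemp)\iota = \idemp(\mu_B\ox A)(B\ox\iota\pi)\iota = \idemp(\mu_B\ox A)(B\ox\iota)$ using $\pi\iota=\mathrm{id}$, which is the desired $B$-action up to the known bilinearity \eqref{wdl8} of $\idemp$; the right $A$-action is symmetric via \eqref{eq:kappaR}.

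The main obstacle I expect is purely bookkeeping: every verification requires sandwiching the relevant composite between $\pi$ and $\iota$, repeatedly rewriting $\iota\pi=\idemp$, and then invoking exactly the right one of \eqref{wdl8}, \eqref{eq:kappaL}, \eqref{eq:kappaR}, \eqref{wdl11}, \eqref{wdl12} to absorb stray idempotents — with care needed to suppress the bicategory's coherence isomorphisms consistently. No single step is deep; the risk is only in getting the order of the three legs of $\mu$ and the placement of units right in each diagram.
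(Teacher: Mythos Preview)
Your proposal is correct and follows essentially the same route as the paper: define $\mu_\Psi=\pi\mu(\iota\ox\iota)$ and $\eta_\Psi=\pi(\eta_B\ox\eta_A)$, then verify every claim by precomposing with $\iota$ (or postcomposing with $\pi$) and reducing to the identities \eqref{wdl8}, \eqref{eq:kappaL}, \eqref{eq:kappaR}, \eqref{wdl11}, \eqref{wdl12} for $\idemp$ and $\mu$. One small slip: when checking multiplicativity of $\iotaB$ you need the \emph{first} diagram of \eqref{eq:wdl_alt} (the one governing $\Psi(\eta_A\ox B)$), not the second; equivalently, as the paper does, observe that $\Psi(\eta_A\ox B)=\idemp(B\ox\eta_A)$ directly from \eqref{eq:kappaL}.
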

\begin{proof}
Equip $B\ox_\Psi A$ with the multiplication
$$
\mu_\Psi:=\big(
\xymatrix{
(B\ox_\Psi A)\ox (B\ox_\Psi A) \ar[r]^-{\iota \ox \iota}&
B \ox A \ox B \ox A \ar[r]^-\mu&
B\ox A \ar[r]^-\pi&
B\ox_\Psi A}\big).
$$
By \eqref{wdl11}, $\pi\mu=\mu_\Psi(\pi\ox \pi)$ and by \eqref{wdl12},
$\mu(\iota\ox \iota)=\iota \mu_\Psi$. Since $\iota$ is a (split) monomorphism
and $\pi$ is a (split) epimorphism, any of these equalities implies
associativity of $\mu_\Psi$. It is also unital with $\eta_\Psi:=\pi(\eta_B \ox
\eta_A)$ since 
$$
\mu_\Psi((B\ox_\Psi A)\ox \pi)((B\ox_\Psi A)\ox \eta_B\ox \eta_A)\pi
\stackrel{\eqref{wdl11}}{=}
\pi\mu(B\ox A\ox \eta_B\ox \eta_A)
\stackrel{\eqref{eq:kappaR}}{=}
\pi\idemp=
\pi,
$$
and symmetrically on the other side.
Unitality of $\iotaB$ is evident. 
We have $\iota\beta\mu_B=\idemp (B\ox \eta_A)\mu_B$ and
by \eqref{wdl12} and \eqref{wdl11}, $\iota \mu_\Psi(\iotaB \ox
\iotaB)=\mu(B\ox \eta_A \ox B\ox \eta_A)$. Hence multiplicativity of $\iotaB$
follows by commutativity of   
$$
\xymatrix{
B\ox B \ar[rr]^-{B\ox \eta\ox B}\ar[dd]_-{\mu}\ar[rd]^-{B\ox B \ox \eta}&&
B\ox A \ox B \ar[rr]^-{B\ox A \ox B\ox \eta}\ar[d]^-{B\ox \Psi}
\ar@{}[ld]|{\eqref{eq:kappaL}}&&
B\ox A \ox B \ox A \ar[d]^-{B\ox \Psi\ox A}\\
&B\ox B \ox A\ar[r]^-{B\ox \idemp}\ar[d]^-{\mu\ox A}&
B\ox B \ox A \ar[rr]^-{B\ox B \ox A \ox \eta}\ar[rrd]_(.4){\mu\ox A}
\ar@{}[d]|{\eqref{wdl8}}&&
B\ox B \ox A \ox A \ar[d]^-{\mu\ox \mu}\\
B\ar[r]_-{B\ox \eta}&
B\ox A \ar[rrr]_-\idemp&&&
B\ox A\ .
}
$$
That $\iotaA$ is an algebra homomorphism follows by symmetry. Finally, 
$$
\iota \mu_\Psi(\beta\ox (B\ox_\Psi A))(B\ox \pi)
\stackrel{\eqref{wdl12}\eqref{wdl11}}=
\mu(B\ox \eta_A \ox B \ox A)
\stackrel{\eqref{eq:kappaL}}=
(\mu_B\ox A)(B\ox \idemp)=
(\mu_B\ox A)(B\ox \iota\pi)
$$
so that $\mu_\Psi(\beta\ox (B\ox_\Psi A))=\pi(\mu_B\ox A)(B\ox \iota)$ as
stated, and symmetrically for the right $A$-action. Therefore,
$$
\mu_\Psi(\beta\ox \alpha)=
\pi(\mu_B\ox A)(B\ox \idemp)(B\ox \eta_B \ox A)
\stackrel{\eqref{wdl8}}=
\pi\idemp=
\pi.
$$ 
\end{proof}

The situation in the above theorem motivates the following notion.

\begin{aussage} {\bf  Bilinear factorization structures.}
In an arbitrary bicategory $\mathcal K$, consider unital monads $(A, \mu_A,
\eta_A)$, $(B, \mu_B,\eta_B)$ and $ (R, \mu_R,\eta_R)$. Let $\iotaA: A
\rightarrow R \leftarrow B:\iotaB$ be 2-cells which are compatible with the
monad structures in the sense of the diagrams 
$$
\xymatrix{
A\ox A \ar[r]^-{\iotaA\ox \iotaA}\ar[d]_-\mu&
R\ox R \ar[d]^-\mu&
B\ox B \ar[l]_-{\iotaB\ox \iotaB}\ar[d]^-\mu&
k\ar@{=}[r]\ar[d]_-\eta&
k\ar[d]^-\eta&
k\ar@{=}[l]\ar[d]^-\eta\\
A\ar[r]_-\iotaA&
R&
B\ar[l]^-\iotaB&
A\ar[r]_-\iotaA&
R&
B\ar[l]^-\iotaB \ ;}
$$
i.e. $\iotaA$ and $\iotaB$ be morphisms of (unital) monads. (They are
monad morphisms with trivial 1-cell parts in the sense of \cite{Street}.)
Regarding $R$ as a left $B$-module via $\mu_R (\iotaB \tensor{} R):B\ox R
\to R$ and a right $A$-module via $\mu_R(R \tensor{} \iotaA):R\ox A \to R$, 
\begin{equation}\label{nu}
\epi:= 
\big(
\xymatrix{
B\ox A \ar[r]^-{\iotaB \tensor{} \iotaA}&
R\ox R \ar[r]^-\mu&
R}\big)
\end{equation}
is a homomorphism of $B$-$A$ bimodules. 
If $\epi$ has a $B$-$A$ bimodule section $\mono$, then we call the datum
$(\iotaA:A\to R\leftarrow B:\beta,\mono:R\to B\ox A)$ a {\em bilinear
factorization structure on $R$} or, shortly, a {\em bilinear factorization
of $R$}.  
\end{aussage}

By Theorem \ref{BPsiA}, any weak distributive law $\Psi:A\ox B \to B \ox A$
for which the idempotent 2-cell $\idemp$ splits, determines a bilinear 
factorization structure $(\iotaA:A\to B\ox_\psi A\leftarrow
B:\iotaB,\iota:B\ox_\Psi A\to B\ox A)$. We turn to proving the converse. 

\begin{theorem}\label{thm:weak_fact}
For a  bilinear factorization structure  $(\iotaA:A\to R\leftarrow
B:\beta,\mono:R\to B\ox A)$ in an arbitrary bicategory $\mathcal K$, 
$$
\Psi:= \big(
\xymatrix{
A\ox B \ar[r]^-{\iotaA \tensor{} \iotaB}&
R\ox R \ar[r]^-\mu&
R \ar[r]^-{\mono}&
B\ox A}\big)
$$ 
is a weak distributive law of  $A$ over $B$ such that the corresponding
idempotent 2-cell $\idemp$ splits. Moreover, $R$ is isomorphic to the
corresponding unital monad $B \tensor{\Psi} A$.    
\end{theorem}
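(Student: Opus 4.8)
The plan is to establish three things in turn: that $\Psi$ satisfies the three defining diagrams of a weak distributive law, that the induced idempotent $\idemp$ coincides with $\mono\epi$ and hence splits (with retract $R$), and that the resulting unital monad $B\tensor{\Psi}A$ is isomorphic to $R$ as a monad.

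First I would set up notation. Write $\epi=\mu_R(\iotaB\ox\iotaA):B\ox A\to R$ and recall that by hypothesis $\epi\mono=R$ and both $\epi$, $\mono$ are $B$-$A$ bimodule maps, where the $B$-$A$ bimodule structure on $R$ is via $\iotaB$ on the left and $\iotaA$ on the right. Note that $\Psi=\mono\mu_R(\iotaA\ox\iotaB)$. The key computational lemma, which I would prove first, is that $\epi\Psi=\mu_R(\iotaA\ox\iotaB)$: this follows because $\epi\mono=R$. Similarly I expect identities like $\epi(B\ox\mu_A)(\mono\ox A)=\mu_R(R\ox\iotaA)$ and its left-handed analogue, coming from $\mono$ being a bimodule section; these say that $\mono$ intertwines the $B$-$A$ bimodule structures, and they will be the workhorses for checking the weak distributive law axioms. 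To verify the first diagram in \eqref{eq:wdl} (compatibility with $\mu_A$), I would apply $\epi$ (a split epi, hence "cancellable" after composing appropriately — more precisely, I would rather compare both composites after noting both factor through $\mono$ applied to something in $R$, then use $\epi\mono=R$) and reduce to an identity among maps into $R$ that holds because $\iotaA$ is a monad morphism and $\mu_R$ is associative. The second diagram (compatibility with $\mu_B$) is symmetric. For the third diagram, I would instead verify the two equivalent diagrams of \eqref{eq:wdl_alt} using the Lemma already proved in the excerpt; each of these unfolds, after composing with $\epi$ and cancelling $\mono$, to the statement that $\iotaA$ and $\iotaB$ preserve units together with associativity and unitality of $\mu_R$.

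Second, with $\Psi$ a weak distributive law in hand, I would compute the associated idempotent. From \eqref{eq:kappaL}, $\idemp=(B\ox\mu_A)(\Psi\ox A)(\eta_A\ox B\ox A)$; substituting the definition of $\Psi$ and using that $\iotaA\eta_A=\eta_R$ (unit preservation) together with $\mu_R(\eta_R\ox R)=R$, this collapses to $\mono\mu_R(\iotaB\ox\iotaA)=\mono\epi$. Thus $\idemp=\mono\epi$, which is manifestly idempotent since $\epi\mono=R$, and it splits through $R$ with projection $\epi:B\ox A\to R$ and injection $\mono:R\to B\ox A$. By the uniqueness (up to isomorphism) of the splitting object, $B\tensor{\Psi}A\cong R$ as $B$-$A$ bimodules, with the isomorphism and its inverse obtained by composing $\pi,\iota$ with the canonical $\epi,\mono$.

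Finally I would upgrade this bimodule isomorphism to a monad isomorphism. By Theorem \ref{BPsiA} the multiplication on $B\tensor{\Psi}A$ is $\mu_\Psi=\pi\mu(\iota\ox\iota)$ where $\mu=(\mu_B\ox\mu_A)(B\ox\Psi\ox A)$; transporting along the identification with $R$, it suffices to check that $\epi\mu(\mono\ox\mono)=\mu_R$ and that $\epi\eta_{B\ox A}=\epi(\eta_B\ox\eta_A)=\eta_R$. The unit equation is immediate from unit preservation of $\iotaA,\iotaB$ and unitality of $\mu_R$. The multiplication equation is the crux: expanding, $\epi\mu(\mono\ox\mono)=\mu_R(\iotaB\ox\iotaA)(\mu_B\ox\mu_A)(B\ox\Psi\ox A)(\mono\ox\mono)$, and I would rewrite the inner $(\iotaB\ox\iotaA)(\mu_B\ox\mu_A)$ using multiplicativity of $\iotaB$ and $\iotaA$ as $(\mu_R\ox\mu_R)(\iotaB\ox\iotaB\ox\iotaA\ox\iotaA)$, then use the bimodule-section identities for $\mono$ to absorb the two factors of $\mono$ and finally associativity of $\mu_R$ to reassemble $\mu_R$. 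I expect this last computation — reconciling $(B\ox\Psi\ox A)(\mono\ox\mono)$ against the $B$-$A$ bimodule structure of $R$ — to be the main obstacle, since it is where the precise form of $\Psi$, the sectioning property of $\mono$, and associativity of $\mu_R$ all have to be used together; the cleanest route is probably to first establish the two one-sided identities $\epi(B\ox\Psi)(B\ox\mono)=$ (right multiplication on $R$ through $\iotaB$) composed appropriately, and dually, and then glue them.
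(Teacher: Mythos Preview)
Your proposal is correct and follows essentially the same approach as the paper: the bimodule identities for $\mono$ that you single out as workhorses are precisely the paper's \eqref{iotaAmod}, the identification $\idemp=\mono\epi$ is the paper's \eqref{kappaiotanu}, and your multiplicativity check is carried out in the paper as \eqref{eq:ip_multi}. The only cosmetic difference is that the paper verifies the third axiom of \eqref{eq:wdl} directly by showing both sides equal $\mono\epi$, rather than passing through the equivalent form \eqref{eq:wdl_alt} as you suggest.
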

\begin{proof}
The assumption that $\mono$ is a morphism of $B$-$A$ bimodules means the
equalities 
\begin{equation}\label{iotaAmod}
\mono \mu_R (R \tensor{} \iotaA) = (B \tensor{} \mu_A)(\mono \tensor{} A)
\qquad \textrm{and}\qquad
\mono \mu_R(\iotaB \tensor{} R) = (\mu_B \ox A)(B \tensor{} \mono). 
\end{equation}
Compatibility of $\Psi$ with the multiplication of $A$ (i.e. the first
diagram in
\eqref{eq:wdl}) follows by commutativity of 
$$
\xymatrix{
A\ox A \ox B\ar[rrr]^-{\mu\ox B}\ar[d]_-{A\ox\iotaA \ox \iotaB}&&&
A\ox B \ar[d]^-{\iotaA\ox \iotaB}\\
A\ox R \ox R\ar[rr]^-{\iotaA\ox R \ox R}\ar[d]_-{A\ox \mu}&&
R\ox R \ox R \ar[r]^-{\mu\ox R}\ar@/^2pc/[ddd]^-{R\ox \mu}&
R\ox R\ar[ddd]^-\mu\\
A\ox R \ar[d]_-{A\ox \mono}\ar@{=}[rrd]\ar@{}[rd]_-{(*)}\\
A\ox B \ox A\ar[r]_-{A\ox \iotaB \ox \iotaA}\ar[d]_-{\iotaA\ox \iotaB\ox A}&
A\ox R \ox R\ar[r]_-{A\ox \mu}\ar[d]^-{\iotaA\ox R \ox R}&
A\ox R \ar[d]_-{\iotaA \ox R}\\
R\ox R\ox A\ar[r]^-{R\ox R\ox \iotaA}\ar[d]_-{\mu\ox A}&
R\ox R\ox R\ar[r]^-{R\ox \mu}\ar[d]^-{\mu\ox R}&
R\ox R\ar[r]^-\mu&
R\ar[dd]^-{\mono}\\
R\ox A \ar[r]^-{R\ox \iotaA}\ar[d]_-{\mono\ox A}
\ar@{}[rrrd]|-{\eqref{iotaAmod}}&
R\ox R\ar[rru]_-{\mu}\\
B\ox A\ox A\ar[rrr]_-{B\ox \mu}&&&
B\ox A.
}
$$
The top region commutes by the multiplicativity of $\alpha$ and the region
labelled by ($\ast$) commutes since $\mono$ is a section of $\epi$ (occurring
at the bottom of this region).  
It follows by symmetrical considerations that $\Psi$ renders commutative also
the second diagram in \eqref{eq:wdl}.
As for the third one concerns, in the diagram
$$
\xymatrix{
B\ox A \ar[r]^-{\eta\ox B\ox A}\ar[d]_-{\iotaB\ox \iotaA}&
A\ox B \ox A \ar[r]^-{\iotaA\ox \iotaB \ox A}&
R\ox R\ox A\ar[r]^-{\mu\ox A}\ar[d]_-{R\ox R\ox \iotaA}&
R\ox A \ar[rr]^-{\mono\ox A}\ar[d]^-{R\ox \iotaA}
\ar@{}[rrd]|-{\eqref{iotaAmod}}
&&
B\ox A\ox A\ar[d]^-{B\ox \mu}\\
R\ox R\ar[rr]^-{\eta\ox R\ox R}\ar@{=}@/_1pc/[rrr]&&
R\ox R\ox R\ar[r]^-{\mu\ox R}&
R\ox R\ar[r]^-\mu&
R\ar[r]^-{\mono}&
B\ox A}
$$
\vspace{.2cm}

\noindent
the region on the left commutes by the unitality of $\iotaA$. 
Commutativity of this diagram yields the equality
\begin{equation}\label{kappaiotanu}
(B \tensor{} \mu_A)(\Psi \tensor{} A)(\eta_A \tensor{} B \tensor{} A) = 
\mono \epi.
\end{equation}  
Symmetrically, 
\[
(\mu_B \tensor{} A)(B \tensor{} \Psi)(B \tensor{} A \tensor{} \eta_B) = 
\mono \epi 
\]
which proves that $\Psi$ renders commutative the third diagram in
\eqref{eq:wdl}, so that $\Psi$ is a weak distributive law.   

By \eqref{eq:kappaL}, the expression on the left hand side of
\eqref{kappaiotanu} is $\idemp$ which clearly splits. The corresponding 1-cell
$B\ox_\Psi A$ is defined (uniquely up-to isomorphism) via some splitting of it
as $\pi_\Psi:B\ox A \to B\ox_\Psi A$ and $\iota_\Psi: B\ox_\Psi A\to B\ox A$. 
By uniqueness up-to isomorphism of the splitting of an idempotent
2-cell, \eqref{kappaiotanu} implies that $B\ox_\Psi A$ and $R$ are isomorphic
1-cells in $\mathcal K$ via the mutually inverse isomorphisms
$\pi_\Psi\mono:R\to B\ox_\Psi A$ and $\epi\iota_\Psi:B\ox_\Psi A \to R$. 

Composing both equal paths in
$$
\xymatrix{
B\ox R \ox R \ox A\ar[rr]^-{\iotaB\ox R \ox R\ox \iotaA}
\ar[d]_-{B\ox \mu \ox A}&&
R\ox R\ox R \ox R\ar[r]^-{\mu\ox \mu}\ar[d]^-{R\ox \mu\ox R}&
R\ox R\ar[d]^-\mu\\
B\ox R\ox A\ar[rr]^-{\iotaB\ox R\ox \iotaA}\ar[d]_-{B\ox \mono\ox A}
\ar@{}[rrrd]|-{\eqref{iotaAmod}}&&
R\ox R\ox R\ar[r]^-{\mu^2}&
R\ar[d]^-{\mono}\\
B\ox B\ox A\ox A\ar[rrr]_-{\mu\ox \mu}&&&
B\ox A,
}
$$
by $B\ox \iotaA\ox \iotaB\ox A$ on the right, we obtain 
\begin{equation}\label{eq:ip_multi}
\mono \mu_R (\epi\ox \epi) = 
(\mu_B \tensor{} \mu_A)(B \tensor{} \Psi \tensor{} A), 
\end{equation}
hence multiplicativity of $\epi$ (and $\mono$). Since $\iota_\Psi$ is
multiplicative by \eqref{wdl12}, so is $\epi\iota_\Psi$.
Finally, 
$$
\epi\iota_\Psi\eta_\Psi=
\epi\iota_\Psi\pi_\Psi(\eta_B\ox \eta_A)
\stackrel{\eqref{kappaiotanu}}=
\epi(\eta_B\ox \eta_A)
\stackrel{\eqref{nu}}=
\mu_R(\eta_R\ox \eta_R)=
\eta_R.
$$
\end{proof}

We close this section by proving that the constructions in Theorem \ref{BPsiA}
and Theorem \ref{thm:weak_fact} can be regarded as the object maps of a
biequivalence between appropriately defined bicategories. 

The bicategory of {\em mixed} weak distributive laws was studied in
\cite{BoLaSt_wdlaw}. Taking the dual notion, we obtain the following.

\begin{aussage} {\bf The bicategory of weak distributive laws.}
The 0-cells of the bicategory $\mathsf{Wdl}({\mathcal K})$ are weak
distributive laws $\Psi:A\ox B \to B \ox A$ in the bicategory $\mathcal
K$. The 1-cells between them consist of monad morphisms (in the sense of
\cite{Street}) $\xi: A'\ox V\to V\ox A$ and $\zeta:B'\ox V \to V\ox B$ with a
common 1-cell $V$ such that the following diagram commutes.
\begin{equation}\label{eq:wdl_1-cell}
\xymatrix{
A'\ox B'\ox V\ar[r]^-{A'\ox \zeta}\ar[d]_-{\Psi'\ox V}&
A'\ox V \ox B\ar[r]^-{\xi\ox B}&
V\ox A \ox B \ar@{=}[r]&
V\ox A \ox B\ar[d]^-{V\ox \Psi}\\
B'\ox A'\ox V\ar[r]_-{B'\ox \xi}&
B'\ox V\ox A \ar[r]_-{\zeta\ox A}&
V\ox B \ox A\ar[r]_-{V\ox \idemp}&
V\ox B \ox A}
\end{equation}
The 2-cells are those 2-cells $\omega:V\to V'$ in $\mathcal K$ which are monad
transformations (in the sense of \cite{Street}) $(V,\xi)\to (V',\xi')$ and
$(V,\zeta)\to (V',\zeta')$. Horizontal and vertical compositions are induced
by those in $\mathcal K$. 
\end{aussage}

\begin{aussage} {\bf The bicategory of bilinear factorization
structures.}
The 0-cells of the bicategory $\Wfact({\mathcal K})$ are the bilinear 
factorization structures  $(\iotaA:A\to R\leftarrow B:\iotaB,\mono:R\to
B\ox A)$ in the bicategory $\mathcal K$. The 1-cells between them are triples
of monad morphisms (in the sense of \cite{Street}) $\xi: A'\ox V\to V\ox A$,
$\zeta:B'\ox V \to V\ox B$ and $\varrho:R'\ox V \to V \ox R$ with a 
common 1-cell $V$ such that the following diagrams commute.
\begin{equation}  \label{eq:wfac_1-cell}
\xymatrix{
A'\ox V \ar[r]^-{\iotaA'\ox V}\ar[d]_-\xi&
R'\ox V \ar[d]^-{\varrho}&&
B'\ox V \ar[r]^-{\iotaB'\ox V}\ar[d]_-\zeta&
R'\ox V \ar[d]^-{\varrho}\\
V\ox A \ar[r]_-{V\ox \iotaA}&
V\ox R &&
V\ox B \ar[r]_-{V\ox \iotaB}&
V\ox R}
\end{equation}
The 2-cells are those 2-cells $\omega:V\to V'$ in $\mathcal K$ which are monad
transformations (in the sense of \cite{Street}) $(V,\xi)\to (V',\xi')$,
$(V,\zeta)\to (V',\zeta')$ and $(V,\varrho)\to (V',\varrho')$. Horizontal and
vertical compositions are induced by those in $\mathcal K$. 
\end{aussage}

\begin{aussage} \label{as:F}
{\bf A pseudofunctor 
$F:\Wfact({\mathcal K})\to \mathsf{Wdl}({\mathcal K})$.}  
The pseudofunctor $F$ takes a bilinear factorization structure 
$(\iotaA:A\to R\leftarrow B:\iotaB,\mono:R\to B\ox A)$ to the corresponding
weak distributive law $\Psi:= \mono\mu_R(\iotaA \tensor{} \iotaB): A \tensor{}
B \rightarrow B \tensor{} A$ in Theorem \ref{thm:weak_fact}. 
It takes a 1-cell $(\xi,\zeta,\varrho)$ to $(\xi,\zeta)$. 
On the 2-cells $F$ acts as the identity map.

The only non-trivial point to see is that $(\xi,\zeta)$ is indeed a 1-cell in 
$\mathsf{Wdl}({\mathcal K})$ by commutativity of the following diagram. 
$$
\xymatrix @R=15pt {
A'\ox B'\ox V \ar[r]^-{A'\ox \zeta}\ar[d]_-{\iotaA'\ox \iotaB'\ox V}
\ar@{}[rd]|-{\eqref{eq:wfac_1-cell}}&
A'\ox V '\ox B\ar[rr]^-{\xi\ox B}\ar[d]^-{\iotaA'\ox V\ox \iotaB}
\ar@{}[rrd]|-{\eqref{eq:wfac_1-cell}}&&
V\ox A \ox B \ar[d]^-{V\ox \iotaA \ox \iotaB}\\
R'\ox R'\ox V\ar[r]_-{R'\ox \varrho}\ar[d]_-{\mu'\ox V}&
R'\ox V \ox R\ar[rr]_-{\varrho\ox R}&&
V\ox R \ox R \ar[d]^-{V\ox \mu}\\
R'\ox V \ar[rrr]^-\varrho\ar[d]_-{\mono'\ox V}&&&
V\ox R \ar[d]^-{V\ox \mono}\\
B'\ox A'\ox V \ar[r]_-{B'\ox \xi}&
B'\ox V \ox A \ar[r]_-{\zeta \ox A}&
V\ox B\ox A \ar[ru]^-{V\ox \epi}_-{\eqref{kappaiotanu}}
\ar[r]_-{V\ox \idemp}&
V\ox B \ox A}
$$
The middle region commutes since $\varrho$ is a monad morphism. 
The bottom region commutes by commutativity of
$$
\xymatrix @C=35pt @R=15pt {
B'\ox A'\ox V\ar[r]^-{\iotaB'\ox \iotaA'\ox V}\ar[d]_-{B'\ox \xi}
\ar@{}[rd]|-{\eqref{eq:wfac_1-cell}}&
R'\ox R'\ox V\ar[r]^-{\mu'\ox V}\ar[d]^-{R'\ox \varrho}&
R'\ox V \ar[dd]^-\varrho\\
B'\ox V \ox A \ar[r]^-{\iotaB'\ox V\ox \iotaA}\ar[d]_-{\zeta\ox A}
\ar@{}[rd]|-{\eqref{eq:wfac_1-cell}}&
R'\ox V \ox R\ar[d]^-{\varrho \ox R}\\
V\ox B \ox A \ar[r]_-{V\ox \iotaB\ox \iotaA}&
V\ox R \ox R\ar[r]_-{V\ox \mu}&
V\ox R}
$$
which, in light of \eqref{nu}, means $(V\ox \epi)(\zeta\ox A)(B'\ox
\xi)=\varrho(\epi'\ox V)$.  
\end{aussage}

\begin{theorem} \label{thm:bieq}
If idempotent 2-cells in a bicategory $\mathcal K$ split, then the
pseudofunctor $F:\Wfact({\mathcal K})\to
\mathsf{Wdl}({\mathcal K})$ in Paragraph \ref{as:F} is a biequivalence.
\end{theorem}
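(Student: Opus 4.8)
Recall that a pseudofunctor is a biequivalence precisely when it is essentially surjective on 0-cells and each of its hom-functors is an equivalence of categories. I plan to verify both, in fact in the sharp forms that $F$ is \emph{surjective} on 0-cells and that each hom-functor $F_{X,Y}\colon\Wfact(\mathcal K)(X,Y)\to\mathsf{Wdl}(\mathcal K)(FX,FY)$ is an \emph{isomorphism} of categories; the reason behind the latter is that the third component $\varrho$ of a 1-cell of $\Wfact(\mathcal K)$ is determined by the first two.

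For the 0-cells, let $\Psi\colon A\ox B\to B\ox A$ be a weak distributive law. Since idempotent 2-cells split in $\mathcal K$, the idempotent $\idemp$ of Proposition~\ref{prop:kappa} splits, so Theorem~\ref{BPsiA} produces a bilinear factorization structure $\mathcal R_\Psi$ on the unital monad $B\ox_\Psi A$. I would then compute $F(\mathcal R_\Psi)$ from its description in Paragraph~\ref{as:F} and Theorem~\ref{thm:weak_fact}: writing $\mu_\Psi=\epi\mu(\mono\ox\mono)$, using the identities $\iota\iotaA=\idemp(\eta_B\ox A)$ and $\iota\iotaB=\idemp(B\ox\eta_A)$ read off from the proof of Theorem~\ref{BPsiA}, and then $\mu(\idemp\ox\idemp)=\mu$ of \eqref{wdl11} and $\idemp\mu=\mu$ of \eqref{wdl12}, the composite $\iota\mu_\Psi(\iotaA\ox\iotaB)$ collapses to $\mu(\eta_B\ox A\ox B\ox\eta_A)$, which by the defining formula of $\mu$ and the unitality of $\mu_A,\mu_B$ equals $\Psi$. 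Hence $\Psi=F(\mathcal R_\Psi)$ lies in the image of $F$.

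Now fix bilinear factorization structures $X=(\iotaA\colon A\to R\leftarrow B\colon\iotaB,\mono)$ and $Y=(\iotaA'\colon A'\to R'\leftarrow B'\colon\iotaB',\mono')$, with the split epimorphisms $\epi,\epi'$ of \eqref{nu}. The last diagram of Paragraph~\ref{as:F} shows that every 1-cell $(\xi,\zeta,\varrho)$ of $\Wfact(\mathcal K)$ satisfies $\varrho(\epi'\ox V)=(V\ox\epi)(\zeta\ox A)(B'\ox\xi)$, whence, since $\epi'$ is split epi with section $\mono'$,
\begin{equation}\label{eq:bieq_rho}
\varrho=(V\ox\epi)(\zeta\ox A)(B'\ox\xi)(\mono'\ox V).
\end{equation}
Thus $\varrho$ is determined by $(\xi,\zeta)$, so $F_{X,Y}$ is injective on 1-cells; it is faithful on 2-cells since $F$ acts as the identity on the underlying 2-cells of $\mathcal K$. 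For surjectivity on 1-cells I would take an arbitrary 1-cell $(\xi,\zeta)$ of $\mathsf{Wdl}(\mathcal K)$ between $FX$ and $FY$, \emph{define} $\varrho$ by the right-hand side of \eqref{eq:bieq_rho}, and check that the triple $(\xi,\zeta,\varrho)$ is then a 1-cell of $\Wfact(\mathcal K)$, so that $F(\xi,\zeta,\varrho)=(\xi,\zeta)$. The unitality of $\varrho$ and the two squares \eqref{eq:wfac_1-cell} follow from \eqref{eq:bieq_rho} by precomposition with $\eta_{R'}\ox V$, $\eta_{B'}\ox A'\ox V$ and $B'\ox\eta_{A'}\ox V$ respectively, using the unit axioms $\zeta(\eta_{B'}\ox V)=V\ox\eta_B$, $\xi(\eta_{A'}\ox V)=V\ox\eta_A$ and the identities $\iotaA'=\epi'(\eta_{B'}\ox A')$, $\iotaB'=\epi'(B'\ox\eta_{A'})$, $\eta_{R'}=\epi'(\eta_{B'}\ox\eta_{A'})$ together with their analogues over $R$. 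Fullness on 2-cells is then immediate: a 2-cell $\omega$ that is a monad transformation for $\xi$ and for $\zeta$ is one for the corresponding $\varrho$'s as well, since these are built from the former by the same recipe \eqref{eq:bieq_rho} with the same $\epi,\mono$.

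The single step that is not routine bookkeeping with units and sections is the multiplicativity of the $\varrho$ of \eqref{eq:bieq_rho}. My plan is to precompose the required identity with the split epimorphism $\epi'\ox\epi'\ox V$ and transport it along $\mono$, invoking \eqref{eq:ip_multi} in the form $\mu_R(\epi\ox\epi)=\epi\mu$ together with its counterpart over $R'$; this reduces the claim to the assertion that $(V\ox\idemp)(\zeta\ox A)(B'\ox\xi)$ intertwines the weak wreath multiplications, i.e.\ that the 1-cell $(\xi,\zeta)$ of weak distributive laws induces a morphism of the associated (non-unital) weak wreath products. This is exactly where the compatibility condition \eqref{eq:wdl_1-cell} --- note that it is phrased in terms of $\idemp$ --- and the monad-morphism axioms for $\xi,\zeta$ are used, in a diagram chase parallel to the one proving \eqref{eq:ip_multi}; I expect this to be the main obstacle. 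Granting it, each $F_{X,Y}$ is bijective on 1-cells and fully faithful on 2-cells, hence an isomorphism of categories, and since $F$ is surjective on 0-cells, $F$ is a biequivalence.
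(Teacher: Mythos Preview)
Your approach coincides with the paper's: surjectivity on 0-cells via Theorem~\ref{BPsiA}, the formula \eqref{eq:bieq_rho} for $\varrho$, multiplicativity of $\varrho$ as the main computation (the paper devotes a landscape-page diagram to it), and the routine fullness/faithfulness on 2-cells. Your added observation that $\varrho$ is \emph{uniquely} determined by $(\xi,\zeta)$, making each $F_{X,Y}$ an isomorphism rather than merely an equivalence, is correct and a nice sharpening.

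There is, however, one underestimate in your sketch. You describe the unitality of $\varrho$ and the two squares \eqref{eq:wfac_1-cell} as ``routine bookkeeping with units and sections,'' to be obtained by precomposing \eqref{eq:bieq_rho} with $\eta_{R'}\ox V$, $\eta_{B'}\ox A'\ox V$, $B'\ox\eta_{A'}\ox V$ and using the unit axioms of $\xi,\zeta$. But \eqref{eq:bieq_rho} involves $\mono'$, not $\epi'$: precomposing with $\iotaA'\ox V$, say, produces $\mono'\iotaA'=\mono'\epi'(\eta_{B'}\ox A')=\idemp'(\eta_{B'}\ox A')=\Psi'(A'\ox\eta_{B'})$, not $\eta_{B'}\ox A'$; similarly $\mono'\eta_{R'}=\Psi'(\eta_{A'}\ox\eta_{B'})$. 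Getting rid of these $\Psi'$-terms requires precisely the compatibility \eqref{eq:wdl_1-cell}, which swaps them to the $V$-side where the unitality of $\xi,\zeta$ can act. The paper's proof makes this explicit: each of the unitality diagram and the two squares for \eqref{eq:wfac_1-cell} invokes \eqref{eq:wdl_1-cell} as a labelled region. So \eqref{eq:wdl_1-cell} is needed not only for multiplicativity but throughout; once you recognize this, the arguments go through as you outline.
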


\begin{proof}
First of all, $F$ is surjective on the objects. In order to see that, take a
weak distributive law $\Psi:A\ox B \to B\ox A$ and evaluate $F$ on the
associated bilinear factorization structure $(\iotaA:A\to B
\ox_\Psi A \leftarrow B:\iotaB, \iota: B\ox_\Psi A \to B \ox A)$ in Theorem
\ref{BPsiA}. The resulting weak distributive law occurs in the top-right path
of  
$$
\xymatrix @C=45pt @R=15pt {
A\ox B \ar[r]^-{\eta\ox A\ox B \ox \eta}\ar[dd]_-\Psi&
B\ox A \ox B \ox A \ar[r]^-{\pi \ox \pi}\ar[dd]^-{B\ox \Psi \ox A}
\ar@{}[rdd]|-{\eqref{wdl12}\eqref{wdl11}}&
(B\ox_\Psi A)\ox (B\ox_\Psi A)\ar[d]^-{\mu_\Psi}\\
&&B\ox_\Psi A\ar[d]^-\iota\\
B\ox A \ar[r]^-{\eta\ox B\ox A \ox \eta}\ar@{=}@/_1pc/[rr]&
B\ox B \ox A \ox A \ar[r]^-{\mu \ox \mu}&
B\ox A.}
$$
\vspace{.2cm}

\noindent
Thus by commutativity of this diagram, it is equal to $\Psi$.

Next we show that $F$ induces an equivalence of the hom categories. The
induced functor of the hom categories is also surjective on the objects. In
order to see that, take a 1-cell $(\xi:A'\ox V\to V \ox A,\zeta:B'\ox V \to V
\ox B)$ in $\mathsf{Wdl}({\mathcal K})$ from the image under $F$ of a 
bilinear factorization structure  $(\iotaA:A\to R\leftarrow B:\iotaB,
\mono:R\to B\ox A)$ to the image of $(\iotaA':A'\to R'\leftarrow B':\iotaB',
\mono':R'\to B'\ox A')$; that is, from the weak distributive law
$\Psi:=\mono\mu_R (\iotaA\ox \iotaB)$ to $\Psi':=\mono'\mu_{R'}(\iotaA'\ox
\iotaB')$. We show that together with    
$$
\varrho:=\big(
\xymatrix{
R'\ox V \ar[r]^-{\mono'\ox V}&
B'\ox A'\ox V\ar[r]^-{B'\ox \xi}&
B'\ox V \ox A \ar[r]^-{\zeta\ox A}&
V\ox B \ox A \ar[r]^-{V\ox \epi}&
V\ox R}\big)
$$
they constitute a 1-cell in $\Wfact({\mathcal K})$. Unitality of
$\varrho$ follows by commutativity of  
$$
\xymatrix{
V\ar@{=}[d]\ar[rrrrr]^-{V\ox \eta}&&&&&
V\ox R 
\ar@{=}
[ddd]\ar@{}[ld]|-{\eqref{nu}}\\
V\ar[rrrr]^-{V\ox \eta\ox \eta}\ar@{=}[d]\ar[rrrd]^-{V\ox \eta\ox \eta}&&&&
V\ox B \ox A \ar[d]_-{V\ox \idemp}
\ar
[ddr]^(.3){V\ox \epi}_(.7){\eqref{kappaiotanu}}
\ar@{}[lld]|-{\eqref{eq:kappaR}}\\
V\ar[r]_-{\eta'\ox \eta'\ox V}\ar[d]_-{\eta'\ox V}&
A'\ox B'\ox V\ar[r]_-{A'\ox \zeta}\ar[d]^-{\Psi'\ox V}
\ar@{}[rrrd]|-{\eqref{eq:wdl_1-cell}}&
A'\ox V \ox B\ar[r]_-{\xi\ox B'}&
V\ox A \ox B \ar[r]_-{V\ox \Psi}&
V\ox B \ox A \ar[d]_-{V\ox \epi}\\
R'\ox V \ar[r]_-{\mono'\ox V}&
B'\ox A'\ox V\ar[r]_-{B'\ox \xi}&
B'\ox V \ox A \ar[r]_-{\zeta\ox A}&
V\ox B \ox A \ar[r]_-{V\ox \epi}&
V\ox R\ar@{=}[r]&
V\ox R.}
$$
The triangular region commutes by the unitality of the monad morphisms $\xi$
and $\zeta$ and the bottom left square commutes by 
$\Psi'(\eta_{A'} \ox \eta_{B'})=\mono'\mu_{R'}(\iotaA'\ox \iotaB') (\eta_{A'}
\ox \eta_{B'})= \mono'\mu_{R'}(\eta_{R'}\ox \eta_{R'})=\mono'\eta_{R'}$. 
Multiplicativity of $\varrho$ is checked on page \pageref{page:rho_multi}. The
regions marked by (m) on page \pageref{page:rho_multi} commute since $\xi$ and 
$\zeta$ are monad morphisms. This proves that $\varrho$ is a monad morphism. 
\hfill

\eject
\thispagestyle{empty}
\begin{landscape}
$$\label{page:rho_multi}
{\color{white} .} 
\hspace{-.7cm}
\scalebox{.92}{\xymatrix{
R'\ox R'\ox V \ar[r]^-{R'\ox \mono'\ox V}\ar[dddddd]_-{\mu'\ox V}
\ar@{}[rdddddd]|-{\eqref{eq:ip_multi}}&
R'\ox B'\ox A'\ox V \ar[r]^-{R'\ox B'\ox \xi}
\ar[d]^-{\mono'\ox B'\ox A'\ox V}& 
R'\ox B'\ox V\ox A \ar[rr]^-{R'\ox \zeta \ox A}&&
R'\ox V \ox B \ox A\ar[rr]^-{R'\ox V \ox \epi}&&
R'\ox V \ox R \ar[d]^-{\mono'\ox V \ox R}\\
&
B'\ox A'\ox B'\ox A'\ox V
\ar[r]^-{\raisebox{5pt}{${}_{B'\ox A'\ox B'\ox \xi}$}}
\ar[d]^-{B'\ox \Psi'\ox A'\ox V}&
B'\ox A'\ox B'\ox V \ox A \ar[rr]^-{B'\ox A'\ox \zeta \ox A}
\ar[d]^-{B'\ox \Psi'\ox V \ox A}
\ar@{}[rrdd]|-{\eqref{eq:wdl_1-cell}}&&
B'\ox A'\ox V \ox B \ox A \ar[rr]^-{B'\ox A'\ox V \ox \epi}
\ar[d]^-{B'\ox \xi \ox B \ox A}&&
B'\ox A'\ox V \ox R \ar[d]^-{B'\ox \xi \ox R}\\
&
B'\ox B'\ox A'\ox A'\ox V 
\ar[r]^-{\raisebox{5pt}{${}_{B'\ox B'\ox A'\ox \xi}$}}
\ar[dd]^-{B'\ox B'\ox \mu'\ox V}
\ar@{}[rdd]|-{(\mathrm{m})}&
B'\ox B'\ox A'\ox V \ox A \ar[d]^-{B'\ox B'\ox \xi \ox A}&&
B'\ox V \ox A \ox B \ox A \ar@{=}[r]\ar[d]^-{B'\ox V \ox \Psi \ox A}&
B'\ox V \ox A \ox B \ox A 
\ar[r]^-{\raisebox{5pt}{${}_{B'\ox V \ox A \ox \epi}$}}
\ar[d]^-{\zeta \ox A \ox B \ox A}&
B'\ox V \ox A \ox R \ar[d]^-{\zeta \ox A \ox R}\\
&&
B'\ox B'\ox V \ox A \ox A 
\ar[r]^-{\raisebox{5pt}{${}_{B'\ox\zeta \ox A\ox A}$}}
\ar[d]^-{B'\ox B'\ox V \ox \mu}&
B'\ox V \ox B \ox A \ox A 
\ar[r]^-{\raisebox{5pt}{${}_{B'\ox V \ox \idemp \ox A}$}}
\ar[d]^-{B'\ox V \ox B \ox \mu}
\ar@{}[rd]|-{\eqref{wdl8}}&
B'\ox V \ox B \ox A \ox A \ar[d]^-{B'\ox V \ox B \ox \mu}&
V \ox B \ox A \ox B \ox A 
\ar[r]^-{\raisebox{5pt}{${}_{V \ox B \ox A \ox \epi}$}}
\ar[d]^-{V \ox B \ox \Psi \ox A}
\ar@{}[rddd]|-{\eqref{eq:ip_multi}}&
V \ox B \ox A \ox R\ar[d]^-{V \ox \epi\ox R}\\
&
B'\ox B'\ox A'\ox V \ar[r]^-{B'\ox B'\ox \xi}\ar[dd]^-{\mu'\ox A'\ox V}& 
B'\ox B'\ox V \ox A \ar[r]^-{B'\ox \zeta\ox A}\ar[dd]^-{\mu'\ox V \ox A}
\ar@{}[rdd]|-{(\mathrm{m})}&
B'\ox V \ox B \ox A \ar[r]^-{B'\ox V \ox \idemp}\ar[d]^-{\zeta \ox B \ox A}&
B'\ox V \ox B \ox A\ar[d]^-{\zeta \ox B \ox A}&
V \ox B \ox B \ox A \ox A\ar[dd]^-{V\ox \mu \ox \mu}&
V\ox R \ox R \ar[dd]^-{V\ox \mu}\\
&&&
V \ox B \ox B \ox A\ar[r]^-{V\ox B \ox \idemp}\ar[d]^-{V\ox \mu \ox A}
\ar@{}[rd]|-{\eqref{wdl8}}&
V \ox B \ox B \ox A \ar[d]^-{V\ox \mu \ox A}&\\
R'\ox V \ar[r]^-{\mono'\ox V}&
B'\ox A'\ox V \ar[r]^-{B'\ox \xi}&
B'\ox V \ox A \ar[r]^-{\zeta\ox A}&
V\ox B \ox A \ar[r]^-{V\ox \idemp}\ar@/^-1.5pc/[rrr]_-{V\ox \pi}
^-{\eqref{kappaiotanu}}&
V\ox B \ox A \ar@{=}[r]&
V\ox B \ox A \ar[r]^-{V\ox \epi}&
V\ox R}}
$$
\end{landscape}

The first diagram in \eqref{eq:wfac_1-cell} commutes by commutativity of 
$$
\xymatrix{
A'\ox V \ar[rrr]^-\xi\ar[rrd]^-{A'\ox V \ox \eta}\ar[rd]_-{A'\ox \eta'\ox V}
\ar[dd]_-{\iotaA'\ox V}&&&
V\ox A \ar[rr]^-{V\ox \iotaA} \ar[d]_-{V\ox A \ox \eta}
\ar@{}[rd]|-{(\ast)}&&
V\ox R \ar@{=}[dd]\ar[ld]_-{V\ox \mono}\\
&A'\ox B'\ox V \ar[r]_-{A'\ox \zeta}\ar[d]^-{\Psi'\ox V}
\ar@{}[rrrrd]|-{\eqref{eq:wdl_1-cell}}&
A'\ox V \ox B\ar[r]_-{\xi\ox B}&
V\ox A \ox B\ar[r]_-{V\ox \Psi}&
V\ox B \ox A \ar[rd]_-{V\ox \epi}&\\
R'\ox V \ar[r]_-{\mono'\ox V}\ar@{}[ru]|-{(\ast)}&
B'\ox A'\ox V\ar[r]_-{B'\ox \xi}&
B'\ox V \ox A \ar[r]_-{\zeta\ox A}&
V\ox B \ox A\ar[rr]_-{V\ox \epi}&&
V\ox R.}
$$
The triangular region
at the top left commutes by the unitality of $\zeta$.
The regions marked by ($\ast$) commute by 
$$
\Psi(A\ox \eta_B)
\stackrel{\eqref{eq:kappaR}}=
\idemp(\eta_B\ox A)
\stackrel{\eqref{kappaiotanu}}=
\iota\pi(\eta_B\ox A)
\stackrel{\eqref{nu}}=
\iota\iotaA.
$$
The second diagram in \eqref{eq:wfac_1-cell} commutes 
by symmetrical considerations.  

The functor induced by $F$ between the hom categories acts on the morphisms as
the identity map, hence it is evidently faithful. It is also full since any
2-cell $\omega:(\xi,\zeta)\to (\xi',\zeta')$ in $\mathsf{Wdl}({\mathcal K})$
is a 2-cell $(\xi,\zeta,\varrho)\to (\xi',\zeta',\varrho')$ in
$\Wfact({\mathcal K})$ by commutativity of 
$$
\xymatrix{
R'\ox V \ar[r]^-{\mono'\ox V}\ar[d]_-{R'\ox \omega}&
B'\ox A'\ox V \ar[r]^-{B'\ox \xi}\ar[d]^-{B'\ox A'\ox \omega}&
B'\ox V \ox A \ar[r]^-{\zeta\ox A}\ar[d]^-{B'\ox \omega \ox A}&
V\ox B\ox A \ar[r]^-{V\ox \epi}\ar[d]^-{\omega \ox B\ox A}&
V\ox R\ar[d]^-{\omega \ox R}\\
R'\ox V' \ar[r]_-{\mono'\ox V'}&
B'\ox A'\ox V' \ar[r]_-{B'\ox \xi'}&
B'\ox V' \ox A \ar[r]_-{\zeta'\ox A}&
V'\ox B\ox A \ar[r]_-{V'\ox \epi}&
V'\ox R.
}
$$
The regions in the middle commute since $\omega$ is a 2-cell in
$\mathsf{Wdl}({\mathcal K})$. 
\end{proof}

\begin{remark}
For an arbitrary bicategory ${\mathcal K}$ -- not necessarily
with split idempotents --, the pseudofunctor $F$ in Paragraph \ref{as:F}
induces a biequivalence  
between $\Wfact({\mathcal K})$ and 
the full subbicategory of $\mathsf{Wdl}({\mathcal K})$ whose 0-cells
are those weak distributive laws $\Psi$ for which the idempotent 2-cell
$\idemp$ splits.  
 It induces in particular a biequivalence between the bicategory of
distributive laws in $\mathcal K$ (as a full subbicategory of
$\mathsf{Wdl}(\mathcal K)$) and the bicategory of strict factorization
structures (as a full subbicategory of $\Wfact({\mathcal K})$),
cf. \cite{RosWoo}.   
\end{remark}

\begin{aussage} {\bf Morphisms with trivial underlying 1-cells.}  
For the algebraists, particularly interesting are those 1-cells in
$\Wfact({\mathcal K})$ and $\mathsf{Wdl}({\mathcal K})$ whose 1-cell
part is trivial -- these are algebra homomorphisms in the usual sense. Such
1-cells form a subcategory of the respective horizontal category. 

In $\Wfact({\mathcal K})$, this means monad morphisms $\varrho:R'\to
R$ which restrict to monad morphisms $\xi:A'\to A$ and $\zeta:B'\to B$,
i.e. for which $\varrho \iotaA'=\iotaA \xi$ and $\varrho \iotaB'=\iotaB
\zeta$. 

The corresponding 1-cells in $\mathsf{Wdl}({\mathcal K})$ are pairs of monad
morphisms $\xi:A'\to A$ and $\zeta:B'\to B$ such that $\idemp (\zeta \ox
\xi)\Psi'= \Psi(\xi\ox \zeta)$.
\end{aussage}
\bigskip
\bigskip

\section{Examples:  Bilinear  factorizations of algebras} 
\label{sec:examples}

The aim of this section is to apply the results in the previous section to the
particular monoidal category -- i.e. one-object bicategory -- of modules over a
commutative ring. (Clearly, in this bicategory idempotent 2-cells split.)
More precisely, we collect here some examples of associative and unital
algebras over a commutative ring $k$ which admit a bilinear 
factorization. Some of these algebras admit a strict factorization as well but
the most interesting ones are those which do not. 

\begin{aussage} \label{as:injective}
{\bf Bilinear factorization via subalgebras.}
The algebra homomorphisms $\alpha:A\to R \leftarrow B:\beta$, occurring in 
a bilinear factorization of an algebra $R$, are not injective in
general. In this paragraph we show however that, for any  bilinear 
factorization structure $(\alpha:A\to R \leftarrow  B:\beta,\iota:R\to B\ox
A)$, there is another  bilinear factorization of $R$ with
injective homomorphisms $\tilde \alpha:\tilde A\to R \leftarrow \tilde
B:\tilde \beta$. We give sufficient and necessary conditions for the latter
factorization to be strict.  

Consider a weak distributive law $\Psi:A\ox B \to B\ox A$, with corresponding
algebra homomorphisms $\alpha:A \to B\ox _\Psi A \leftarrow B:\beta$ obtained
by the corestrictions of $\Psi(A\ox \eta):A\to B \ox A \leftarrow
B: \Psi(\eta\ox B)$, cf. Theorem \ref{BPsiA}. 
Put $\tilde A:=\mathrm{Im}(\alpha)\subseteq B\ox _\Psi A \supseteq
\mathrm{Im}(\beta)=:\tilde B$. Then $\alpha$ factorizes through an
epimorphism $A\twoheadrightarrow \tilde A$ and a monomorphism $\tilde
\alpha:\tilde A \rightarrowtail B\ox _\Psi A$ of algebras. Similarly, $\beta$
factorizes through an epimorphism $B\twoheadrightarrow \tilde B$ and a
monomorphism $\tilde \beta:\tilde B \rightarrowtail B\ox _\Psi A$ of
algebras. 
By Theorem \ref{BPsiA}, 
$\iota : B \ox_{\Psi} A \to B \ox A$ is a $B$-$A$-bimodule section of
$\mu_\Psi(\beta\ox\alpha)$, 
so that 
$$
\tilde \iota :=\big(
\xymatrix{ B \ox_{\Psi} A \ar^-{\iota}[r] & B \ox A \ar@{->>}[r]
  & \tilde B \ox \tilde A} \big)
$$
is a $\tilde B$-$\tilde A$-bimodule section of $\mu_\Psi(\tilde \beta\ox\tilde
\alpha)$. 
Therefore $(\tilde \alpha : \tilde A \rightarrow B \ox_{\Psi} A \leftarrow 
\tilde B:\tilde \beta, \tilde \iota : B \ox_{\Psi} A \rightarrow \tilde B \ox
\tilde A)$  
is a bilinear  factorization of $B \ox_{\Psi} A$ via subalgebras. 

By Theorem \ref{thm:weak_fact} there is a weak distributive law  
$\tilde \Psi : =\tilde \iota\mu_\Psi(\tilde\alpha \ox\tilde \beta)$
such that $\tilde B \ox_{\tilde \Psi} \tilde A$ is isomorphic to $B \ox_{\Psi}
A$.
The weak distributive law $\tilde \Psi$ is a proper distributive law if and
only if both unitality conditions $\tilde \Psi(\tilde A \ox \tilde
\eta)=\tilde\eta \ox \tilde A$ and $\tilde \Psi(\tilde \eta\ox \tilde
B)=\tilde B \ox \tilde \eta$ hold. They amount to commutativity of the
following diagrams. 
\begin{equation}\label{eq:subalg}
\xymatrix @C=30pt @R=10pt {
A \ar[r]^-{A\ox \eta} \ar[dd]_-{\eta\ox A}&
A\ox B \ar[r]^-\Psi&
B\ox A \ar[d]^-{\eta\ox B \ox A \ox \eta}
&
B\ar[r]^-{\eta\ox B}\ar[dd]_-{B\ox \eta}&
A\ox B \ar[r]^-\Psi&
B\ox A \ar[d]^-{\eta\ox B \ox A \ox \eta}\\
&&(A\ox B)^{\ox 2}\ar[d]^-{\Psi^{\ox 2}}
&
&&(A\ox B)^{\ox 2}\ar[d]^-{\Psi^{\ox 2}}\\
B\ox A \ar[r]_-{\eta\ox B \ox A \ox \eta}&
(A\ox B)^{\ox 2}\ar[r]_-{\Psi^{\ox 2}}&
(B\ox A)^{\ox 2}
&
B\ox A \ar[r]_-{\eta\ox B \ox A \ox \eta}&
(A\ox B)^{\ox 2}\ar[r]_-{\Psi^{\ox 2}}&
(B\ox A)^{\ox 2}}
\end{equation}
\end{aussage} 

\begin{aussage}\label{as:e}
{\bf Extension of a distributive law.}
Let $A$ be an (associative and unital) algebra over a commutative ring $k$;
and let $e\in A$ such that $ea=eae$ for all $a\in A$ (so that in particular
$e^2=e$). Then $eA$ is a subalgebra of $A$ though with a different unit
element $e$. 

Assume that $\Phi:eA\ox B \to B \ox eA$ is a distributive law. 
It induces an algebra structure on $B\ox eA$ with unit $1\ox e$ and
multiplication $(b'\ox ea')(b\ox ea)=b'\Phi(ea'\ox b)ea=b'\Phi(ea'\ox
b)a$. The maps 
$$
\alpha:A \to B\ox eA, \qquad a\mapsto 1\ox ea\qquad \textrm{and}\qquad
\beta: B \to B\ox eA, \qquad b\mapsto b\ox e
$$
are clearly algebra homomorphisms inducing the $B$-$A$ bimodule map 
$$
\pi:B\ox A \to B\ox eA, \qquad b \ox a\mapsto b\ox ea.
$$
Since $\pi$ possesses a $B$-$A$ bimodule section $\iota:b\ox ea\mapsto b\ox
ea$, the datum $(\alpha:A\to B\ox eA\leftarrow B:\beta,\iota:B\ox eA\to B\ox
A)$ is a  bilinear factorization structure. Hence by  Theorem
\ref{thm:weak_fact} there is a weak distributive law
$$
\Psi: A\ox B\to B \ox A,\qquad a\ox b\mapsto \Phi(ea\ox b)
$$
such that the weak wreath product algebra $B\ox_\Psi A$ is isomorphic the the
strict wreath product $B\ox_\Phi eA$.

By the above considerations, for any element $e$ of $A$ satisfying $ea=eae$
for all $a\in A$, and for any algebra $B$, there is a weak distributive law
$$
A\ox B \to B \ox A,\qquad a\ox b \mapsto b \ox ea
$$
such that the corresponding weak wreath product is the tensor product algebra
$B \ox eA$ with the factorwise multiplication. 
If $B$ is the trivial $k$-algebra $k$, this gives a weak distributive law 
$$
A \cong A \ox k   \to k \ox A \cong A,\qquad a\mapsto ea
$$
and the corresponding weak wreath product algebra $eA$.
\end{aussage}

\begin{aussage} \label{as:Ore}
{\bf Weak Ore extension.}
Recall (e.g. from \cite{MConRob}) that a {\em quasi-derivation} on an
(associative and unital) algebra $B$  over a commutative ring $k$, consists of
a (unital) algebra homomorphism $\sigma:B\to B$ and a $k$-module map
$\delta:B\to B$ such that 
$$
\delta(bb')=\sigma(b)\delta(b')+\delta(b)b',\qquad \textrm{for}\ b,b'\in B. 
$$
Associated to any quasi-derivation, there is an {\em Ore extension}
$B[X,\sigma,\delta]$ of $B$.
As a $k$-module it is the tensor product of $B$ with the algebra
$k[X]$ of polynomials in a formal variable $X$, equipped with the $B$-$k[X]$
bilinear associative and unital multiplication determined by 
$$
(1\ox X)(b\ox 1)=\sigma(b)\ox X+\delta(b)\ox 1, \qquad \textrm{for}\ b\in B. 
$$
Clearly, the Ore extension is a wreath product of $B$ and $k[X]$ with respect
to a distributive law defined iteratively, see \cite[Example 2.11
(1)]{CIMZ}.  
The following characterization can be found e.g. in \cite[Section
1.2]{MConRob}. An algebra $T$ is an Ore extension of $B$ if and only if the
following hold.  
\begin{itemize}
\item $T$ has a subalgebra isomorphic to $B$;
\item there is an element $X$ of $T$ such that the powers of $X$ are linearly
  independent over $B$ and they span $T$ as a left $B$-module;
\item $XB\subseteq BX+B$.
\end{itemize} 
In what follows, we generalize the notion of a quasi-derivation on $B$ and the 
corresponding construction of Ore extension of $B$. The resulting algebra
$B[X,\sigma,\delta]$ will be a weak wreath product of $B$ with
$k[X]$. However, we also show that it is a proper Ore extension of the image
of $B$ in $B[X,\sigma,\delta]$.    

Let $B$ be an (associative and unital) algebra over a commutative ring $k$,
and let $p$ and $q$ be elements of $B$ such that 
$$
p^2=p,\quad
q^2=0,\quad
pq=q,\quad
qp=0,\quad
\mathrm{and}\quad
pbp=bp,\quad
\textrm{for all}\ b\in B.
$$
Then by a {\em $(p,q)$-quasi-derivation} we mean a couple of $k$-linear maps
$\sigma,\delta:B\to B$ such that the following identities hold for all
$b,b'\in B$:
$$
\begin{array}{lll}
\sigma(bb')=\sigma(b)\sigma(b'),\qquad
&\sigma(1)=\sigma(p)=p,\qquad
&\sigma(q)=0,\\
\delta(bb')=\sigma(b)\delta(b')+\delta(b)b'p,\qquad
&\delta(1)=\delta(p)=q,\qquad
&\delta(q)=0.
\end{array}
$$
So that a $(1,0)$-quasi-derivation coincides with the classical notion of
quasi-derivation recalled above. For example, if $B$ is the algebra of
$2 \times 2$ upper triangle matrices of entries in $k$, we may take 
$$
p:=\left(\begin{array}{rr}1&0\\0&0\end{array}\right),\qquad
q:=\left(\begin{array}{rr}0&1\\0&0\end{array}\right),\qquad
\sigma\left(\begin{array}{rr}a&b\\0&c\end{array}\right):=
\left(\begin{array}{rr}a&0\\0&0\end{array}\right),\qquad 
\delta\left(\begin{array}{rr}a&b\\0&c\end{array}\right):=
\left(\begin{array}{rr}0&a\\0&0\end{array}\right).
$$
In terms of a $(p,q)$-quasi-derivation $(\sigma,\delta)$ on an algebra $B$,
define a $k$-module map $\Psi:B\ox k[X]\to k[X]\ox B$ iteratively as
\begin{eqnarray*}
&&\Psi(1\ox b):=bq\ox X+bp\ox 1,\\
&&\Psi(X\ox b):=\sigma(b)q\ox X^2+(\sigma(b)+\delta(b)q)\ox X + 
\delta(b)p\ox 1,\\
&&\Psi(X^{n+1}\ox b):=\Psi(X^n\ox \sigma(b))X+\Psi(X^n\ox \delta(b)),
\end{eqnarray*}
for $n>1$ and $b\in B$.
By induction in $n$ and $m$, one easily checks the following
properties for all $b,b'\in B$ and $n, m\geq 0$.
\begin{itemize}
\item $\Psi(X^n\ox bp)=\Psi(X^n\ox b)$ and $\Psi(X^n\ox bq)=0$;
\item $b\Psi(X^n\ox 1)=\Psi(1\ox b)X^n$,
\item $(B\ox \mu)(\Psi\ox k[X])(k[X]\ox \Psi)(X^n\ox X^m\ox b)=
\Psi(X^{n+ m}\ox b)$,
\item $(\mu \ox k[X])(B\ox \Psi)(\Psi\ox B)(X^n\ox b\ox b')=\Psi(X^n\ox bb')$.
\end{itemize}
That is, $\Psi$ is a weak distributive law and we may regard the corresponding
weak wreath product $B\ox_\Psi k[X]$ as a {\em weak Ore extension} of $B$.

Note however, that $\Psi$ renders commutative both diagrams in
\eqref{eq:subalg}. Hence $B\ox_\Psi k[X]$ is a strict wreath product of the
subalgebras $\tilde B=\{b(q\ox X + p\ox 1)\ |\ b\in B\}$ 
and $\widetilde{k[X]}$, the latter having the set of powers $\{\Psi(X\ox
1)^n=\Psi(X^n\ox 1)\ |\ n\geq 0\}$ as a $k$-basis. In fact, by the
characterization of Ore extensions recalled above, the weak Ore extension 
$B\ox_\Psi k[X]$ is isomorphic to an Ore extension of $\tilde B$.
\end{aussage}

\begin{aussage}
{\bf Distributive laws over separable Frobenius algebras.}\label{as:sF}
An (associative and unital) algebra $R$ over a commutative ring $k$ is said to
possess a {\em Frobenius} structure if it is a finitely generated and
projective $k$-module and there is an isomorphism of (say) left $R$-modules
from $R$ to $\hat R:=\mathrm{Hom}(R,k)$. A more categorical characterization
is this. Any $k$-algebra $R$ can be regarded as an $R$-$k$ bimodule; that is,
a 1-cell $k\to R$ in the bicategory $\mathsf{Bim}$ of $k$-algebras, bimodules
and bimodule maps. It possesses a right adjoint, the $k$-$R$ bimodule
(i.e. 1-cell $R\to k$) $R$. Whenever $R$ is a finitely generated and
projective $k$-module, the 1-cell $R:k\to R$ possesses also a left adjoint
$\hat R:R\to k$ (with right $R$-action $\varphi \leftharpoonup
r=\varphi(r-)$). A Frobenius structure is then an isomorphism between the
right adjoint $R:R\to k$ and the left adjoint $\hat R:R\to k$. In technical
terms, a Frobenius structure is given by an element $\psi\in \hat R$ (called a 
Frobenius functional) and an element $\sum_i e_i \ox f_i\in R \ox R$ (called a
Frobenius basis) such that $\sum_i \psi(re_i)f_i=r=\sum_ie_i\psi(f_ir)$, for
all $r\in R$. Note that a Frobenius algebra $R$ possesses a canonical
(Frobenius) coalgebra structure with $R$-bilinear comultiplication $r\mapsto
\sum_i re_i \ox f_i=\sum_i e_i \ox f_ir$ and counit $\psi$. 
For more on Frobenius algebras we refer to \cite{Abrams} and
\cite{Street:Frob}. 

A {\em separable} structure on a $k$-algebra $R$ is an $R$-bilinear section of
the multiplication map $R\ox R \to R$. Categorically, this means a
section of the counit of the adjunction $R \dashv R:R \to k$.

Finally, a {\em separable Frobenius} structure on $R$ is a Frobenius structure
$(\psi,\sum_i e_i \ox f_i)$ such that the multiplication $R\ox R \to R$ is
split by the $R$-bilinear comultiplication $r\mapsto \sum_i re_i \ox
f_i=\sum_i e_i \ox f_ir$. In other words, a Frobenius structure $(\psi,\sum_i
e_i \ox f_i)$ such that $\sum_i e_i f_i=1_R$. Categorically, the counit of the
adjunction $R\dashv R:R\to k$ is split by the unit of the adjunction $\hat R
\cong R \dashv R:k\to R$.

For a separable Frobenius algebra $R$, a right $R$-module $M$ and a left
$R$-module $N$, the canonical epimorphism 
$$
\pi:M\ox_k N \to M\ox_R N,\qquad \quad m\ox_k n\mapsto m \ox_R n 
$$
is split by
$$
\iota:M\ox_R N \to M\ox_k N,\qquad m \ox_R n \mapsto \sum_i m.e_i 
\ox_k f_i.n, 
$$
naturally in $M$ and $N$. Thus the image of $\iota$ is isomorphic to $M\ox_R
N$. 

Let $R$ be a $k$-algebra. A monad $A$ on $R$ in $\mathsf{Bim}$ is given by a
$k$-algebra homomorphism $\tilde\eta:R\to A$. (Then $\tilde\eta$ induces an
$R$-bimodule structure on $A$; $\tilde\eta$ serves as the $R$-bilinear unit
morphism; and the $R$-bilinear multiplication $\tilde\mu:A\ox_R A \to A$ is the
projection of the multiplication $\mu:A\ox_k A \to A$ of the $k$-algebra $A$.)
A distributive law in $\mathsf{Bim}$ over $R$ is an $R$-bimodule map
$\Phi:A\ox_R B \to B\ox_R A$ rendering commutative the diagrams 
$$
\xymatrix{
A\ox_R A \ox_R B \ar[r]^-{A\ox_R \Phi}\ar[d]_-{\tilde \mu\ox_R B}&
A\ox_R B\ox_R A\ar[r]^-{\Phi\ox_R A}&
B\ox_R A\ox_R A\ar[d]^-{B\ox_R \tilde \mu}
&
B\ar[r]^-{\tilde\eta\ox_R A}\ar@{=}[d]&
A\ox_R B \ar[d]^-\Phi\\
A\ox_R B \ar[rr]_-\Phi&&
B\ox_R A
&
B\ar[r]_-{B\ox_R \tilde\eta}&
B\ox_R A}
$$
together with their symmetrical counterparts. 
Then $\Phi$ induces on $B\ox_R A$ the structure of a monad in $\mathsf{Bim}$
over $R$ -- that is, an algebra structure 
$(b'\ox_R a')(b\ox_R a)=b'\Phi(a'\ox_R b)a$ and an algebra homomorphism
$\tilde \eta\ox_R \tilde \eta: R \to B\ox_R A$. Moreover, 
$$
\alpha:=\tilde \eta\ox_R A:A\to B\ox_R A \leftarrow B:B\ox_R \tilde
\eta=:\beta
$$
are monad morphisms -- that is, algebra homomorphisms which are
compatible with the homomorphisms $\tilde\eta$.
Composing $\beta\ox_k\alpha:B\ox_k A \to B\ox_R A \ox_k B\ox_R A$ with the
multiplication induced by $\Phi$ on $B\ox_R A$ we re-obtain the canonical 
epimorphism $\pi:B\ox_k A\to B\ox_R A$. 

Whenever $R$ is a separable Frobenius algebra, $\pi$ possesses a $B$-$A$
bimodule section $\iota$ above. That is to say, $(\alpha:A \to B \ox_R
A\leftarrow B:\beta, \iota:B\ox_R A\to B\ox_kA)$ is a  bilinear 
factorization structure. Hence by  Theorem \ref{thm:weak_fact} there is
a weak distributive law of the $k$-algebra $A$ over $B$. Explicitly, it comes
out as 
$$
\xymatrix{
A\ox_k B \ar[r]^-\pi&
A\ox_R B \ar[r]^-\Phi&
B\ox_R A \ar[r]^-\iota&
B\ox_k A
}
$$
with corresponding idempotent 
$$
\xymatrix{
B\ox_k A \ar[r]^-\pi&
B\ox_R A \ar[r]^-\iota&
B\ox_k A\ .
}
$$
Hence the resulting weak wreath product is isomorphic to the algebra $B\ox_R A$
with the multiplication induced by $\Phi$. 
\end{aussage}

\begin{aussage}
{\bf The direct sum of weak distributive laws.}\label{as:dirsum}
Assume that we have a finite collection $\Phi_i:A_i\ox B_i \to B_i \ox
A_i$ of distributive laws between algebras over a commutative ring
$k$. Consider the direct sum algebras $A:=\oplus_i A_i$ (with multiplication
$a_ia'_j=\delta_{i,j} a_ia'_i$ and unit $\sum_i 1_{A_i}$) and $B:=\oplus_i
B_i$. It is straightforward to see that   
\begin{equation}\label{eq:dsum}
A\ox B =\oplus_{i,j} (A_i \ox B_j) \to \oplus_{i,j} (B_j \ox A_i)=B\ox A,
\qquad 
a_i \ox b_j \mapsto \delta_{i,j} \Phi_i(a_i \ox b_i)
\end{equation}
is a weak distributive law. 

We claim that it is of the type in Paragraph \ref{as:sF}. Let $R$ be the
algebra $\oplus_i k$ with minimal orthogonal idempotents $p_i$. Clearly, $R$ is
a separable Frobenius algebra via the Frobenius functional $\psi:R\to k$,
$p_i\mapsto 1$ and the separable Frobenius basis $\sum_i p_i\ox p_i \in R\ox
R$. Thus we conclude that $A\ox_R B$ is isomorphic to $\oplus_i (A_i \ox B_i)$
and $B\ox_R A$ is isomorphic to $\oplus_i (B_i\ox A_i)$. An $R$-distributive
law is given by   
$$
\xymatrix{
A\ox_R B \cong \oplus_i (A_i \ox B_i) \ar[r]^-{\oplus \Phi_i}&
\oplus_i (B_i\ox A_i) \cong B\ox_R A.}
$$
Applying to it the construction in Paragraph \ref{as:sF}, we re-obtain the
weak distributive law \eqref{eq:dsum}.
\end{aussage}

\begin{aussage}\label{as:wba}
{\bf Smash product with a weak bialgebra.}
{\em Weak bialgebras} are generalizations of bialgebras, see \cite{Nill} and 
\cite{BNSz:WHAI}. A weak bialgebra over a commutative ring $k$ is a $k$-module
$H$ carrying both an (associative and unital) $k$-algebra structure
$(\mu,\eta)$ and a (coassociative and counital) $k$-coalgebra structure
$(\delta, \varepsilon)$. The comultiplication is required to be multiplicative
-- equivalently, the multiplication is required to be
comultiplicative. However, multiplicativity of the counit, unitality of the
comultiplication and unitality of the counit are replaced by the weaker axioms  
\begin{eqnarray*}
\varepsilon(ab_1)\varepsilon(b_2c)=&\varepsilon(abc)&=
\varepsilon(ab_2)\varepsilon(b_1c), \quad \textrm{for all}\ a,b,c\in H,\\
(\delta(1)\ox 1)(1\ox \delta(1))=&\delta^2(1)&=
(1\ox \delta(1))(\delta(1)\ox 1),
\end{eqnarray*}
where the usual Sweedler-Heynemann index convention is used for the components
of the comultiplication, with implicit summation understood. In particular, we
write $\delta(1)=1_1\ox 1_2=1_{1'}\ox 1_{2'}$ -- possibly with primed indices
if several copies occur. 

The category of (say) right modules of a weak bialgebra over $k$ is monoidal
though not with the same monoidal structure as the category of
$k$-modules. Indeed, if $M$ and $N$ are right $H$-modules, then there is a
diagonal action $(m\ox n)\leftharpoonup h:=m \leftharpoonup h_1 \ox n
\leftharpoonup h_2$ on the $k$-module tensor product $M\ox N$ but it fails to 
be unital. A unital $H$-module is obtained by taking the $k$-module retract 
$$
M\boxtimes N:=\{m \leftharpoonup 1_1 \ox n \leftharpoonup 1_2\ |\  m\in M,
n\in N\}. 
$$
This defines a monoidal product $\boxtimes$ with monoidal unit 
$$
\{\overline \sqcap(h):=\varepsilon(h1_1)1_2\ |\ h\in H\},
$$ 
with $H$-action
$
\overline \sqcap(h) \leftharpoonup h':=
\overline \sqcap(\overline \sqcap(h)h')=
\varepsilon(h1_{1'})\varepsilon(1_{2'}h'1_1)1_2=
\varepsilon(hh'1_1)1_2=
\overline\sqcap(hh')
$.
With respect to this monoidal structure the forgetful functor from the
category of right $H$-modules to the category of $k$-modules is both monoidal
and opmonoidal (hence preserves algebras and coalgebras) but it is not strict
monoidal. 

A right {\em module algebra} of a weak bialgebra $H$ is a monoid in the
category of right $H$-modules. That is, a $k$-algebra $A$ equipped with an
(associative and unital) right $H$-action such that 
$$
(a \leftharpoonup h_1)(a' \leftharpoonup h_2)=aa' \leftharpoonup h
\qquad \textrm{and}\qquad 
1 \leftharpoonup h = 1 \leftharpoonup \overline \sqcap(h),
$$
for all $a,a'\in A$ and $h\in H$.
For any right $H$-module algebra $A$, there is a weak distributive law
$$
\Psi:A\ox H \to H \ox A,\qquad 
a\ox h \mapsto h_1\ox a \leftharpoonup h_2.
$$
It is multiplicative in $A$ by the $H$-linearity of the multiplication in $A$:  
\begin{eqnarray*}
(H\ox \mu)(\Psi\ox A)(A\ox \Psi)(a'\ox a\ox h)&=&
h_1\ox (a' \leftharpoonup h_2)(a\leftharpoonup h_3)\\
&=& h_1 \ox (a'a) \leftharpoonup h_2=
\Psi(\mu \ox H)(a'\ox a\ox h).
\end{eqnarray*}
Multiplicativity in $H$ follows by multiplicativity of the comultiplication in
$H$: 
\begin{eqnarray*}
(\mu \ox A)(A\ox \Psi)(\Psi\ox H)(a\ox h\ox h')=
h_1h'_1 \ox a \leftharpoonup h_2h'_2=
(hh')_1 \ox a \leftharpoonup (hh')_2=
\Psi(A\ox \mu)(a\ox h \ox h').
\end{eqnarray*}
In order to check the weak unitality condition, note that for all $a\in A$,
$$
1_1 \ox a  \leftharpoonup 1_2 = 
1_1 \ox (1a)  \leftharpoonup 1_2=
1_1 \ox (1 \leftharpoonup 1_2)(a  \leftharpoonup 1_3)=
1_1 \ox (1 \leftharpoonup 1_21_{1'})(a  \leftharpoonup 1_{2'})=
1_1 \ox (1 \leftharpoonup 1_2)a.
$$
Also, for all $h\in H$,
$$
\delta(h1_1)\ox 1_2=
h_11_1\ox h_21_2 \ox 1_3=
h_11_{1'}\ox h_21_{2'}1_1 \ox 1_2=
(h1)_1\ox (h1)_21_1\ox 1_2=
h_1\ox h_21_1 \ox 1_2,
$$
hence $h1_1 \ox 1_2=h_1\varepsilon(h_21_1)\ox 1_2$. With these identities at
hand, 
\begin{eqnarray*}
(H\ox \mu)(\Psi \ox A)(\eta \ox H \ox A)(h\ox a)&=&
h_1\ox (1 \leftharpoonup h_2)a=
h_1\ox (1 \leftharpoonup \overline \sqcap(h_2))a\\
&=& h_1\varepsilon(h_2 1_1)\ox (1\leftharpoonup 1_2)a=
h1_1\ox a \leftharpoonup 1_2\\
&=& (\mu \ox A)(H\ox \Psi)(H\ox A \ox \eta)(h\ox a).
\end{eqnarray*}
The weak wreath product corresponding to $\Psi$ is known as a
{\em weak smash product}, see  \cite{NSzW}.

In the rest of this paragraph we show that the weak distributive law $\Psi$
above is of the kind discussed in Paragraph \ref{as:sF}. 
Let us introduce a further map $\sqcap:H\to H$, $h\mapsto
1_1\varepsilon(h1_2)$. It is easy to see that for any $h,h'\in H$,
\begin{itemize}
\item $\varepsilon \sqcap(h)=\varepsilon(h) =\varepsilon
\overline \sqcap(h)$;
\item $\delta\sqcap(h)=1_1\ox \sqcap(h)1_2$  and $\delta \overline
  \sqcap(h)=1_1 \overline \sqcap(h) \ox 1_2$;
\item $\sqcap(\sqcap(h)h')=\sqcap(hh')=\sqcap ( \overline \sqcap(h)h')$ and 
$\overline \sqcap(\sqcap(h)h')=\overline \sqcap(hh')=
\overline \sqcap (\overline \sqcap(h)h')$; 
\item $\overline \sqcap(h)\sqcap(h')=\sqcap(h') \overline \sqcap(h)$;
\item $\sqcap(h\sqcap(h'))=
\sqcap(\sqcap(h)\sqcap(h'))=
1_1\varepsilon(\sqcap(h)\sqcap(h')1_2)=
\sqcap(h)_1\sqcap(h')_1\varepsilon(\sqcap(h)_2\sqcap(h')_2)=
\sqcap(h)\sqcap(h')$
and symmetrically, $ \overline \sqcap(h \overline \sqcap(h'))=  \overline
\sqcap(h) \overline \sqcap(h')$. 
\end{itemize}
Note that $\overline \sqcap (H)$ possesses a separable Frobenius
structure (cf. \cite{Szlach:Fields}) with Frobenius functional given by the
restriction of $\varepsilon$ and Frobenius basis $\overline \sqcap(1_2)\ox
\overline \sqcap(1_1)=1_2 \ox \overline \sqcap(1_1)$ (where the equality
follows by  
$1_1\ox \overline \sqcap(1_2)=
1_1\ox \varepsilon(1_21_{1'})1_{2'}=
1_1\ox \varepsilon(1_2)1_3=
1_1\ox 1_2$):
$$
1_2\varepsilon(\overline \sqcap(1_1)\overline \sqcap(h))=
\varepsilon(1_1\overline \sqcap(h))1_2=
\varepsilon(\overline\sqcap(h)_1) \overline \sqcap(h)_2=
\overline \sqcap(h)=
\overline \sqcap \sqcap \overline \sqcap(h)=
\varepsilon(\overline \sqcap(h)1_2)\overline \sqcap(1_1).
$$
Hence also the opposite algebra $R:=\overline \sqcap(H)^{op}$ has a separable
Frobenius structure with the same Frobenius functional $\varepsilon$ and
Frobenius basis $\overline \sqcap(1_1) \ox 1_2$. Moreover,
$$
\sqcap( \overline \sqcap(h) \overline \sqcap(h'))=
\sqcap(\sqcap(h) \overline \sqcap(h'))=
\sqcap( \overline \sqcap(h')\sqcap(h))=
\sqcap(h'\sqcap(h))=
\sqcap(h')\sqcap(h)=
\sqcap \overline \sqcap(h')\sqcap \overline \sqcap(h).
$$
That is, the restriction of $\sqcap$ yields an algebra homomorphism $R\to
H$. There is an algebra homomorphism $R\to A$, $r\mapsto
1 \leftharpoonup r$ as well. They induce $R$-actions on $A$ and $H$.
By 
$\overline \sqcap\sqcap=\overline \sqcap$ we conclude that, for all $h\in
H$, $1 \leftharpoonup \overline \sqcap(h)=1\leftharpoonup h=1\leftharpoonup
\sqcap(h)$ and thus
$$
a \leftharpoonup \sqcap(h)=
(a1) \leftharpoonup \sqcap(h)=
(a \leftharpoonup 1_1)(1 \leftharpoonup \sqcap(h)1_2)=
a(1 \leftharpoonup \sqcap(h))=
a(1 \leftharpoonup \overline \sqcap(h)).
$$
Consequently,
$$
\Psi(a(1 \leftharpoonup \overline \sqcap(h')) \ox h)=
\Psi(a \leftharpoonup \sqcap(h')\ox h)=
h_1\ox a \leftharpoonup \sqcap(h')h_2=
(\sqcap(h')h)_1\ox a \leftharpoonup (\sqcap(h')h)_2=
\Psi(a\ox \sqcap(h')h).
$$
This means that $\Psi$ projects to an $R$-distributive law 
$$
A\ox_{R} H \to H \ox_{R} A,\qquad 
a \ox_{R} h \mapsto h_1 \ox_{R} a\leftharpoonup h_2.
$$
Multiplicativity in both arguments is obvious. Unitality follows by 
$$
1_1 \ox_{R} a \leftharpoonup 1_2=
1_1 \ox_{R} a \leftharpoonup \overline \sqcap(1_2)=
1_1 \ox_{R} (1 \leftharpoonup \overline \sqcap(1_2))a=
1_1\sqcap(1_2) \ox_{R} a=
1 \ox_{R} a
$$
and 
$$
h_1 \ox_{R}  1 \leftharpoonup h_2=
h_1 \ox_{R}  1 \leftharpoonup \overline \sqcap(h_2)=
h_1\sqcap(h_2) \ox_{R}  1 =
h \ox_{R}  1 .
$$
Applying the construction in Paragraph \ref{as:sF} to this
$R$-distributive law, it yields a weak distributive law $A\ox H \to H\ox
A$, 
$$
a\ox h \mapsto 
h_1 \sqcap\overline \sqcap(1_1)\ox 
(1\leftharpoonup \overline \sqcap(1_2))(a \leftharpoonup h_2)= 
h_1\sqcap(1_1) \ox a \leftharpoonup h_2\overline \sqcap(1_2).
$$
Since $\sqcap(1_1)\ox 1_2=1_1 \ox 1_2 = 1_1 \ox \overline \sqcap(1_2)$, this
is equal to $\Psi$.
\end{aussage}

\begin{aussage}\label{as:2x2=3}
{$\mathbf {2x2=3}$.}
In this paragraph we present a bilinear factorization of the
algebra $T$ of $2 \times 2$ upper triangle matrices over a field $k$ of
characteristic different from 2, in terms of two copies of the group algebra
$k\mathbb{Z}_2$ of the order 2 cyclic group. So the attitudinizing title
refers to the vector space dimensions: we obtain a 3 dimensional
non-commutative algebra as a weak wreath product of two 2 dimensional
commutative algebras. 

A $k$-linear basis of $T$ is given by 
$$
1:=\left(\begin{array}{rr}1&0\\0&1\end{array}\right),\qquad
a:=\left(\begin{array}{rr}1&1\\0&-1\end{array}\right),\qquad
b:=\left(\begin{array}{rr}-1&0\\0&1\end{array}\right).
$$
These basis elements satisfy $ab=a+b-1$ and $ba=-(a+b+1)$.
Denote the second order generator of the cyclic group $\mathbb{Z}_2$ by $g$
and consider the following algebra homomorphisms. 
$$
\alpha: k\mathbb{Z}_2 \to T, \qquad 
g\mapsto a\qquad \textrm{and}\qquad 
\beta:k\mathbb{Z}_2 \to T, \qquad 
g\mapsto b.
$$
In terms of $\alpha$ and $\beta$, we put
$$
\pi:=\big(\xymatrix{
k\mathbb{Z}_2 \ox k\mathbb{Z}_2\ar[r]^-{\beta\ox \alpha}&
T\ox T \ar[r]^-\mu&
T}\big),
$$
with values 
$$
\pi(1\ox 1)=1,\qquad 
\pi(1\ox g)=a,\qquad
\pi(g\ox 1)=b,\qquad
\pi(g\ox g)=ba=-(a+b+1).
$$
It is straightforward to check that $\pi$ has a section $\iota:T\to
k\mathbb{Z}_2 \ox k\mathbb{Z}_2$
with values
\begin{eqnarray*}
&&\iota(1)=
\frac 1 4 (3\cdot 1\ox 1-1\ox g-g\ox 1-g\ox g),\\
&&\iota(a)=
\frac 1 4 (-1\ox 1+3\cdot 1\ox g-g\ox 1-g\ox g),\\
&&\iota(b)=
\frac 1 4 (-1\ox 1-1\ox g+3\cdot g\ox 1-g\ox g),\\
\end{eqnarray*}
which is a homomorphism of $k\mathbb{Z}_2$-bimodules, with respect to the
action induced by $\beta$ on the first factor and the action induced by
$\alpha$ on the second factor. This shows that $T$ has a bilinear 
factorization in terms of the algebra homomorphisms $\alpha$ and $\beta$.

By Theorem \ref{thm:weak_fact} there is a corresponding weak distributive law
$$
\Psi:=\big(\xymatrix{
k\mathbb{Z}_2 \ox k\mathbb{Z}_2\ar[r]^-{\alpha\ox \beta}&
T\ox T \ar[r]^-\mu&
T\ar[r]^-\iota&
k\mathbb{Z}_2 \ox k\mathbb{Z}_2}\big),
$$
with values
\begin{eqnarray*}
&&\Psi(1\ox 1)=
\frac 1 4(3\cdot 1\ox 1-1\ox g-g\ox 1-g\ox g),\\
&&\Psi(1\ox g)=
\frac 1 4(-1\ox 1-1\ox g + 3\cdot g\ox 1 -g\ox g),\\
&&\Psi(g\ox 1)=
\frac 1 4(-1\ox 1+3\cdot 1\ox g -g\ox 1-g\ox g),\\
&&\Psi(g\ox g)=
\frac 1 4(-5\cdot 1\ox 1+3\cdot 1\ox g +3\cdot g\ox 1 -g\ox g),
\end{eqnarray*}
such that $k\mathbb{Z}_2 \ox_\Psi k\mathbb{Z}_2 \cong T$.
\end{aussage}

\end{document}